\newtheorem{proposition}{Proposition}
\newtheorem {lemma}{Lemma}
\newtheorem {theorem}{Theorem}
\newtheorem {Corollary}{Corollary}
\newtheorem{remark}[theorem]{Remark}
\numberwithin{equation}{section}
\numberwithin{equation}{section}
\numberwithin{equation}{section}
\begin{document}
\title{On the spectral gap of Brownian motion with jump boundary}
\author{Martin Kolb}
\address{Department of Statistics, 1 South Parks Road, Oxford, OX1 3TG United Kingdom}
\email{kolb@stats.ox.ac.uk}
\author{Achim W\"ubker}
\address{Fachbereich Mathematik, Universit\"at Osnabr\"uck, Albrechtstrasse 28a, 49076 Osnabr\"uck, Germany}
\email{awuebker@Uni-Osnabrueck.de}
\thanks{A.W. is supported by Deutsche Forschungsgemeinschaft}

\begin{abstract}
In this paper we consider the Brownian motion with jump boundary and present a new proof of a recent result of Li, Leung and Rakesh concerning the exact convergence rate in the one-dimensional case. Our methods are different and mainly probabilistic relying on coupling methods adapted to the special situation under investigation. Moreover we answer a question raised by Ben-Ari and Pinsky concerning the dependence of the spectral gap on the jump distribution in a multi-dimensional setting.
\end{abstract}

\maketitle

\section{Introduction and Notation}
This article investigates the so called Brownian motion with jump boundary (BMJB) and related models, which in recent years gave rise to several interesting results (see e.g. \cite{GK1}, \cite{GK2}, \cite{K}, \cite{BaP1} and \cite{BaP2}). The process itself can be easily described. Consider a
Brownian motion (BM) with initial value $x_0$ in an open domain $D \subset \mathbb{R}^d$ which we assume to have a $C^{2,\alpha}$-boundary for convenience. When hitting the boundary $\partial D$ of $D$, the BM gets redistributed in $D$ according to the jump distribution $\nu$, runs again until it hits the boundary, gets redistributed and repeats this behavior forever. Obviously there are a number of possible generalizations, e.g. the BM can be replaced by a more general diffusion and the jump distribution might depend on the boundary point, which is hit by the diffusion (see \cite{BaP1}). In this work we restrict ourselves to the model of a diffusion -- corresponding to a 'good' elliptic differential operator $L$ -- with a fixed jump distribution and partly even to BM with a fixed jump distribution, a setting already leading to interesting and highly non-trivial questions concerning e.g. the relation between the spectral gap of the process, the spectrum of the Dirichlet Laplacian and the jump distribution. 

The \textit{first} main result of this paper establishes in a general setting the continuous dependence of the spectral gap of the process on the jump distribution, where continuity is meant with respect to the weak topology. This answers a question posed by I. Ben-Ari and R. Pinsky in \cite{BaP1}.

The \textit{second} main contribution of this work is a direct probabilistic route to the calculation of the spectral gap $\gamma_1(\nu)$ in the case of a one-dimensional Brownian motion in $D=(a,b)$ with an arbitrary jump distribution based on the famous probabilistic coupling method. This enables to understand the somewhat surprisingly fast rate to equilibrium and its independence on $\nu$ on a 'path level'. As was already observed in previous contributions to the theme of the present work it turns out that $\gamma_1(\nu)$ is strictly larger than the lowest eigenvalue $\lambda_0^{(a,b)}$ of $-\frac{1}{2}\frac{d^2}{dx^2}$ with Dirichlet boundary conditions (see e.g. \cite{LLR} in the general case and \cite{GK1}, \cite{GK2}, \cite{BaP1} and \cite{BaP2} in the case when $\nu$ belongs to the class of Dirac-measures). 
Even more remarkably the spectral gap $\gamma_1(\nu)$ is constant in $\nu$ and coincides with the second Dirichlet eigenvalue $\lambda_1^{(a,b)}$. This has already been known due to a result in \cite{BaP2} in combination with a theorem in \cite{LLR}. More precisely, Ben-Ari and Pinsky \cite{BaP2} had determined the real-valued subset of the spectrum of the generator. Hereupon, Li, Leung and Rakesh \cite{LLR} established that the spectrum is in fact a subset of the real line. Their proof is quite involved and uses highly non-trivial results from Fourier analysis. 
In \cite{LLR} it is explicitly mentioned that the authors can not provide an intuitive explanation for the $\nu$-independence of the spectral gap. This remark together with the open question concerning the dependence of the spectral gap on the jump distribution -- formulated by Ben-Ari and Pinsky in \cite{BaP2} -- initiated the present work.

 Despite our purely theoretical study we want to mention that the process considered in this work or closely related versions are used in financal applications and also as simple models of processes in neuroscience and operations research. Moreover we point out that some known results together with (often slightly different) proofs are included in this work in order to be self-contained and readable independent from previous works.

This work is organized as follows: 
In section 2, we analyze the $L$-diffusion with jump boundary from a probabilistic point of view. More precisely, we re-prove already-known results concerning exponential ergodicity using classical renewal theory, which yields non-tangible upper bounds for the convergence rate. A sharp lower bound for this rate is derived in section 3.   
In section 4 we answer the question posed by Iddo Ben-Ari and Ross Pinsky by establishing the continuity of the spectral gap $\gamma_1(\nu)$ in $\nu$ with respect to the weak topology in a general multi-dimensional setting. This continuity property is then used in order to reduce the explicit calculation of $\gamma_1(\nu)$ in the case of a one-dimensional BMJB to jump-distributions $\nu$ with compact support in $(a,b)$. Finally, in section 5 we construct an efficient coupling in the case of a one-dimensional BMJB with compactly supported jump distribution $\nu$. This coupling is used in order to determine $\gamma_1(\nu)$.

In the remainder of this introduction we fix the notation and recall some basic facts which will be needed throughout this work. 
\subsection{Notation}
We introduce the process in higher dimensions and later specialize to the one dimensional situation. Let $D \subset \mathbb{R}^d$ be a domain with $C^{2,\alpha}$-boundary ($1>\alpha>0$) and let $(\Omega, (B_t)_{t \geq 0},(\mathbb{P}_x)_{x \in D})$ denote a smooth uniformly elliptic diffusion in $D$ which is killed after hitting the boundary of $D$, i.e. $(B_t)_{t \geq 0}$ is a diffusion process associated to a generator $L$ of the form 
\begin{displaymath}
L:= \frac{1}{2}\sum_{i,j=1}^da_{ij}(x)\partial_{ij} + \sum_{i=1}^d b_i(x)\partial_i,
\end{displaymath}
where the matrix $a=(a_{ij})_{i,j=1}^d$ is uniformly elliptic with symmetric coefficients $a_{ij}$, i.e. $a_{ij}=a_{ji}$. Moreover, we assume that
$a_{ij}$ and $b_i$ are bounded and have bounded derivatives. Of course, somewhat weaker assumptions are possible, but we do not aim for such a generality in order to avoid technical difficulties.
This process induces a compact semigroup of bounded operators $(P^D_t)_{t \geq 0}$ in $L^{2}(D)$, which is generated by $L$,  with Dirichlet boundary conditions as operator acting in $L^2(D)$. The spectrum $\Sigma(-L)$ thus consists of a sequence $(\lambda_k^D)_{k=0}^{\infty}$ converging to infinity. If the operator $L$ is \textit{symmetric} then there exists an associated orthonormal basis (of a suitable $L^2$-space) $(\varphi_k^D)_{k=0}^{\infty}$ of eigenfunctions. For example $L = \frac{1}{2}\sum_{i,j=1}^d\partial_i(a_{ij}(x)\partial_{j})$ is symmetric in $L^2(D)$. We denote by $g^D(\cdot, \cdot)$, $p^D(t,\cdot,\cdot)$ the Green- and the transition-function associated to BM in $D$ killed at the boundary $\partial D$. If $\nu$ is a probability measure in $D$ we denote by $p^D(t,\nu,y)$ and $g^{D}(\nu,y)$ the expressions
\begin{displaymath}
\int_D p^D(t,x,y)\,\nu(dx)\,\text{   and   }\, \int_D g^D(x,y)\,\nu(dx),
\end{displaymath}
respectively. Observe that $(P^{D}_t)_{t \geq 0}$ acts also in a consistent way on $L^p(D)$ for every $1 \leq p \leq \infty$. 

Let $W^{\rho,1}$ be a $L$-diffusion in $D$  with initial distribution $\rho$ which is killed at $\partial D$. Moreover, let $(W^{\nu,i})_{i \geq 2}$ denote an independent family of killed $L$-diffusion in $D$ with initial distribution $\nu$, which is independent of $W^{\rho,1}$. Set $T^{\rho,\nu}_1=\inf \bigl\lbrace t \geq 0 \mid W^{\rho,1}_t \in \partial D \bigr\rbrace$, $S^{\nu}_i = \inf \bigl \lbrace t \geq 0 \mid W^{\nu,i}_t \in \partial D \bigr\rbrace $ and inductively we define $T^{\rho,\nu}_{i+1}:= T^{\rho,\nu}_i+S^{\nu}_{i+1}$ for $i \geq 1$.
The process $(X^{\rho,\nu}_t)$, called a diffusion with jump boundary starting from the initial distribution $\rho$, is now defined as
\begin{equation}\label{definitionBMJB}
 X^{\rho,\nu}_t := \mathbf{1}_{\lbrace 0 \leq t < T^{\rho,\nu}_1 \rbrace}W^{\rho,1}_t+\sum_{i=2}^{\infty}\mathbf{1}_{\lbrace T_{i-1}^{\rho,\nu}\leq t < T_{i}^{\rho,\nu}\rbrace}W^{\nu,i}_{t-T^{\rho,\nu}_{i-1}}.
\end{equation}
If $L$ is just the Laplacian then we simply write BMJB instead of Brownian motion with jump boundary. If the initial distribution is clear from the context we just write $(X^{\nu}_t)_{t \geq 0}$ instead of $(X^{\rho, \nu}_t)_{t \geq 0}$. 

The process $X_{t}^{\nu}$ induces an operator semi-group on $L^{\infty}(D)$ by
\begin{equation}
\mathcal{P}_{t}^{\nu}f(x)=\mathbb{E}_x\bigl[f(X^{\nu}_t)\bigr]=\int_{a}^{b}f(y)\mathbb{P}\bigl(X_{t}^\nu\in dy|X_{0}^{\nu}=x\bigr).
\end{equation}
Instead of $\mathbb{P}(X^{\rho,\nu}\in \cdot)$ we will often write $\mathbb{P}_{\rho}(X^{\nu}_t \in \cdot)$ or even $\mathbb{P}_{\rho}(X_t \in \cdot)$. 

Moreover, $\|\cdot\|_{TV}$ denotes the total variation norm and for a measure $\gamma$ and a measurable function $f$ we write occasionally $\langle \gamma, f \rangle$ instead of $\int f \, d\gamma$.

It has been shown by previous authors (\cite{GK1}, \cite{GK2}, \cite{GK3} and \cite{BaP2}) that this process is uniformly geometrically ergodic, i.e. there exists a probability measure $\mu^{\nu}$ such that 
\begin{equation}\label{gamma}
 -\lim_{t \rightarrow \infty}\frac{1}{t}\log\sup_ {x \in D}\bigl\| \mathbb{P}_x\bigl(X^{\nu}_t \in \cdot)-\mu^{\nu}(\cdot)\bigr\|_{TV} =  \gamma_1(\nu) >  0,
\end{equation}
It is well-known that this is in fact equivalent to 
\begin{equation}
\lim_{t\rightarrow\infty}-\frac{1}{t}\log\sup_{f\in L^{\infty},\|f\|_{\infty}=1}\bigl\|\mathcal{P}_{t}^{\nu}f-\int f \,d\mu^{\nu}\bigr\|_{\infty}=\gamma_1(\nu),
\end{equation}
and hence $\gamma_1(\nu)$ is referred to as the spectral gap.
Since the spectrum $\Sigma$ of an operator and its dual coincide, it is reasonable to investigate the spectral properties of $\mathcal{P}^{\nu}_t$ ($t>0$) by studying the spectrum of $\mathcal{S}^{\nu}_t$ on the Banach-space $L^1(D)$, which is often easier to analyze and defined by
\begin{equation}\label{dualsemigroup}
L^1(D) \ni g \mapsto \mathcal{S}^{\nu}_tg(\cdot):= \int_Dg(x)p^{\nu}(t,x,\cdot)\,dx,
\end{equation}
where $p^{\nu}(t,\cdot,\cdot)$ denotes the transition density of the process $(X^{\nu}_t)_{t \geq 0}$. The existence of the continuous transition density $p^{\nu}(t,\cdot,\cdot)$ can be deduced from the existence of the continuous transition density $p^D(t,\cdot,\cdot)$ via decomposing $\mathbb{E}_x[f(X^{\nu}_t)]$ according the number of jumps, which occured before time $t$. This is formulated in a precise way in equation (1.2) of \cite{BaP2}. In particular this implies that $\mathcal{P}_t^{\nu}$ maps bounded countinuous function onto bounded continuous functions.

It can be easily verified that
\begin{equation}
\langle \mathcal{S}^{\nu}_t g, f\rangle =\langle g, \mathcal{P}^{\nu}_t f\rangle\,\,\,\forall g\in L^{1}(D),f\in L^{\infty}(D),
\end{equation}
i.e.
\begin{equation}\label{duality}
\mathcal{S}_t^{\nu\ast}=\mathcal{P}^{\nu}_t.
\end{equation}
It is well-known from general spectral theory that this implies 
\[\Sigma(\mathcal{S}_t^{\nu})=\overline{\Sigma(\mathcal{P}^{\nu}_t)} \mbox{ for all }t\in\mathbb{R}_{+},\]
i.e. up to complex conjugation both operators have the same spectrum at a fixed time $t$. And due to compactness of $\mathcal{S}_t^{\nu}$ as shown in great generality in \cite{BaP1} one can apply the spectral mapping theorem in order to relate the spectrum of the semigroup to the spectrum of its generator (for a very readable account on semigroup theory we refer to \cite{EN}).

\section{Geometric Ergodicity using Renewal Theory}
In this section we give a short proof of ergodicity and geometric ergodicity of multi-dimensional $L$-diffusion with jump boundary. We want to stress that this result is well-known and not at all surprising. We include this result here, since our arguments slightly differ from the ones used in previous works and are rather short. The only basic ingredient is renewal theory (see in particular \cite{As} and \cite{T}). \\
%and $\tilde{T}^{\nu}_n$ an independent sequence of times which are distributed as the first exit time of a BM with initial distribution $\nu$ on $(a,b)$.
We will need the following certainly known Lemma.
\begin{lemma}\label{filem}
Assume that we have for every $x\in D$ that $\mathbb{P}_{x}\bigl(X^{\nu}_t\in \cdot\bigr)\longrightarrow \mu^{\nu}$ in the weak topology. Then $\mu^{\nu}$ is 
the unique invariant measure for $X^{\nu}$, i.e.
\begin{equation}
\mu^{\nu}(A)=\mathbb{P}_{\mu^{\nu}}(X^{\nu}_t\in A) \mbox{ for all }A\in\mathcal{B}(D), \,t\in\mathbb{R}_{+}.
\end{equation}
\end{lemma}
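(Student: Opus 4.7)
The plan is to prove invariance via the semigroup property combined with the Feller-type property already noted in the excerpt (that $\mathcal{P}^\nu_t$ maps $C_b(D)$ into $C_b(D)$), and then to derive uniqueness from dominated convergence.

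First I would establish invariance on bounded continuous test functions. Fix $f\in C_b(D)$ and $s>0$. Set $g:=\mathcal{P}^\nu_s f$, which is again bounded and continuous. By the Markov/semigroup property,
\begin{equation*}
\mathcal{P}^\nu_{t+s}f(x)=\mathcal{P}^\nu_t g(x)=\mathbb{E}_x\bigl[g(X^\nu_t)\bigr]=\int_D g(y)\,\mathbb{P}_x(X^\nu_t\in dy).
\end{equation*}
Letting $t\to\infty$, the hypothesis of weak convergence applied once to $f$ and once to $g$ gives
\begin{equation*}
\int_D f\,d\mu^\nu=\lim_{t\to\infty}\mathcal{P}^\nu_{t+s}f(x)=\lim_{t\to\infty}\int_D g(y)\,\mathbb{P}_x(X^\nu_t\in dy)=\int_D \mathcal{P}^\nu_s f\,d\mu^\nu.
\end{equation*}
Hence $\mu^\nu \mathcal{P}^\nu_s=\mu^\nu$ on $C_b(D)$.

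Next I would upgrade this identity from $C_b(D)$ to bounded Borel functions, in particular to indicators $f=\mathbf{1}_A$ for $A\in\mathcal{B}(D)$. This is a standard monotone-class/Dynkin argument: the collection of bounded Borel $f$ satisfying $\langle\mu^\nu,\mathcal{P}^\nu_s f\rangle=\langle\mu^\nu,f\rangle$ is a vector space closed under bounded monotone limits and contains $C_b(D)$, hence contains all bounded Borel functions. Applied to $f=\mathbf{1}_A$ this yields $\mu^\nu(A)=\mathbb{P}_{\mu^\nu}(X^\nu_s\in A)$ for every $A\in\mathcal{B}(D)$ and $s\ge 0$, which is the claimed invariance.

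For uniqueness, suppose $\tilde\mu$ is another invariant probability measure. Then for every $f\in C_b(D)$ and every $t>0$,
\begin{equation*}
\int_D f\,d\tilde\mu=\int_D \mathcal{P}^\nu_t f\,d\tilde\mu=\int_D \mathbb{E}_x[f(X^\nu_t)]\,\tilde\mu(dx).
\end{equation*}
By the weak-convergence hypothesis, $\mathcal{P}^\nu_t f(x)\to\int f\,d\mu^\nu$ pointwise in $x$, and $|\mathcal{P}^\nu_t f(x)|\le\|f\|_\infty$, so dominated convergence gives $\int f\,d\tilde\mu=\int f\,d\mu^\nu$. Since $C_b(D)$ separates Borel probability measures on $D$, we conclude $\tilde\mu=\mu^\nu$.

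The main (minor) obstacle is the passage from invariance tested against $C_b$ to invariance as a statement about Borel sets; this is handled cleanly by the monotone-class argument sketched above, which relies on the Feller-type property $\mathcal{P}^\nu_s(C_b(D))\subset C_b(D)$ already quoted from \cite{BaP2} in the introduction. No probabilistic machinery beyond the semigroup property and weak convergence is needed.
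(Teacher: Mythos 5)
Your invariance argument is essentially the same as the paper's: both split the $(t+s)$-semigroup via the Markov property, fix the "outer" time, send the "inner" time to infinity, and apply the weak-convergence hypothesis to the bounded continuous test function $\mathcal{P}^\nu_{\text{fixed}}f$ using exactly the Feller-type property quoted from \cite{BaP2}. The difference is that you also spell out the routine monotone-class passage from $C_b(D)$ to Borel sets and give an explicit dominated-convergence proof of uniqueness, both of which the paper leaves implicit; these additions are correct and do no harm.
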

\begin{proof}
The weak convergence $\mathbb{P}_{x}\bigl(X^{\nu}_t\in \cdot\bigr)\longrightarrow \mu^{\nu}$ implies that for all bounded and continuous $f\in C(D)$ 
\begin{eqnarray}
\int_D f\,d\mu&=&\lim_{s\rightarrow\infty}\mathbb{E}_{x}\bigl[f(X^{\nu}_{t+s})\bigr]=\lim_{s\rightarrow\infty}\mathbb{E}_{x}\bigl[\mathbb{E}_{x}\bigl[f(X^{\nu}_{t+s})|X^{\nu}_s\bigr]\bigr]\nonumber\\
&=&\lim_{s\rightarrow\infty}\mathbb{E}_{x}\bigl[\mathbb{E}_{X^{\nu}_s}f(X^{\nu}_t)\bigr]=\mathbb{E}_{\mu^{\nu}}\bigl[f(X^{\nu}_t)\bigr],
\end{eqnarray}
where the last equality follows from the continuity of $\mathbb{E}_{y}\bigl[f(X^{\nu}_t)\bigr]=\int_D p^{\nu}(t,y,z)f(z)\,dz$ with respect to $y \in D$. One way to establish such this continuity is via equation \eqref{homren0}.
\end{proof}
Now let us provide a direct route for deriving the form of the invariant measure.

The above Lemma suggests to calculate $\mathbb{E}_x\bigl[f(X^{\nu}_t)\bigr]$ for continuous functions $f\in C(D)$.  
\begin{equation}\label{homren0}
\begin{split}
  \mathbb{E}_x\bigl[f(X^{\nu}_t)\bigr] &= \mathbb{E}_x\bigl[f(X^{\nu}_t),T^{\nu}_1>t\bigr] + \mathbb{E}_x\bigl[f(X^{\nu}_t),\,T^{\nu}_1\le t\bigr] \\
  &= \mathbb{E}_x\bigl[f(X^{\nu}_t),T^{\nu}_1>t\bigr]  + \int_{0}^{t}\mathbb{E}_{x}\bigl[f(X^{\nu}_t)|T^{\nu}_1=s\bigr]\mathbb{P}_{x}\bigl(T^{\nu}_{1}\in\,ds\bigr) \\
  &=\mathbb{E}_x\bigl[f(X^{\nu}_t),T^{\nu}_1>t\bigr]+ \int_{0}^{t}\mathbb{E}_{\nu}\bigl[f(X^{\nu}_{t-s})\bigr]\mathbb{P}_{x}\bigl(T^{\nu}_{1}\in\,ds\bigr).
 \end{split}
\end{equation}
This is called a delayed renewal equation, which can be further analyzed by considering the associated pure renewal equation
\begin{equation}
\begin{split}
\mathbb{E}_\nu\bigl[f(X^{\nu}_t)\bigr]= \mathbb{E}_\nu\bigl[f(X^{\nu}_t),T^{\nu}_1>t\bigr] + \int_{0}^{t}\mathbb{E}_{\nu}\bigl[f(X^{\nu}_{t-s})\bigr]\mathbb{P}_{\nu}\bigl(T^{\nu}_{1}\in\,ds\bigr),
\end{split}
\end{equation} 
or for short
\[Z(t)=z(t)+Z*F(t),\]
with $Z(t)=\mathbb{E}_\nu\bigl[f(X^{\nu}_t)\bigr]$, $z(t)= \mathbb{E}_\nu\bigl[f(X^{\nu}_t),T^{\nu}_1>t\bigr]$ and $F(t)=\mathbb{P}_{\nu}\bigl(T^{\nu}_{1}\in[0,t]\bigr)$. It is well-known from general renewal theory (see e.g. \cite{As}) that
\[Z(t)=U*z(t),\]
where $U(t)=\sum_{n=0}^{\infty}F^{\ast n}(t)$ and $F^{\ast 0}$ is defined as the Dirac distribution function degenerated at zero.
Since $z(t)$ is directly Riemann integrable (for a definition see e.g. \cite{As}), we can apply the Key Renewal Theorem (see e.g. \cite{As}, p. 155),
which says that
\begin{equation}\label{homren-1}
Z(t)=U\ast z(t)\rightarrow\frac{1}{m}\int_{0}^{\infty}z(s)ds,\,\quad\,m=\int_{0}^{\infty}t\,dF(t)
\end{equation}
to obtain
\begin{equation}\label{homren1}
\begin{split}
\lim_{t\rightarrow\infty}\mathbb{E}_\nu\bigl[f(X^{\nu}_t)\bigr]&=\frac{1}{\mathbb{E}_{\nu}[T_{1}^{\nu}]}\int_{0}^{\infty}\mathbb{E}_\nu\bigl[f(X^{\nu}_s),T^{\nu}_1>s\bigr]ds\\
&=\frac{1}{\mathbb{E}_{\nu}[T_{1}^{\nu}]}\int_{D}\int_{0}^{\infty}p^{D}(t,\nu,y)\,dt f(y)dy\\
&=\int_{D}f(y)\frac{g^{D}(\nu,y)}{\mathbb{E}_{\nu}[T_{1}^{\nu}]}dy
\end{split}.
\end{equation}
Inserting (\ref{homren1}) into (\ref{homren0}) yields
\[
\lim_{t\rightarrow\infty}\mathbb{E}_x\bigl[f(X^{\nu}_t)\bigr]=\int_{D}f(y)\frac{g^{D}(\nu,y)}{\mathbb{E}_{\nu}[T_{1}^{\nu}]}dy
\]
We have just derived the form of the invariant measure in a constructive and probabilistically rather direct way. Let us capture the preceding considerations by 
\begin{theorem}\label{first}
Let $(X_t^{\nu})_{t \geq 0}$ denote a $L$-diffusion process in the domain $D$ with an arbitrary jump distribution $\nu$. Then the process $(X^{\nu}_t)_{t \geq 0}$ is ergodic in the sense that for all continuous functions $f$ we have
\begin{equation}
\lim_{t\rightarrow\infty}\mathbb{E}_x\bigl[f(X^{\nu}_t)\bigr]=\int_Df(y)\mu^{\nu}(dy),
\end{equation}
where the invariant distribution $\mu^{\nu}$ is given by 
\begin{displaymath}
 \mu^{\nu}(dy) = \frac{1}{m^{\nu}}g^{D}(\nu,y)\,dy,
\end{displaymath}
with 
\begin{equation}\label{drift}
 m^{\nu} = \mathbb{E}_{\nu}\bigl[T^{\nu}_1\bigr] = \int_D g^{D}(\nu,y)\,dy.
\end{equation} 
\end{theorem}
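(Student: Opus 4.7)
The plan is to follow a renewal-theoretic route that matches the informal derivation already laid out before the statement. First I would condition on the time $T^{\nu}_1$ of the first hit of the boundary: on $\{T^{\nu}_1 > t\}$ the process $X^{\nu}$ agrees with the killed $L$-diffusion started at $x$, whereas on $\{T^{\nu}_1 = s\}$ with $s\le t$, the definition of the jump mechanism combined with the strong Markov property at $T^{\nu}_1$ implies that conditionally $X^{\nu}_t$ is distributed as $X^{\nu}_{t-s}$ with initial distribution $\nu$. This yields the delayed renewal identity
\begin{equation*}
\mathbb{E}_x\bigl[f(X^{\nu}_t)\bigr] = \mathbb{E}_x\bigl[f(X^{\nu}_t),\,T^{\nu}_1>t\bigr] + \int_0^t \mathbb{E}_{\nu}\bigl[f(X^{\nu}_{t-s})\bigr]\,\mathbb{P}_x\bigl(T^{\nu}_1\in ds\bigr).
\end{equation*}

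Next I would specialize the initial distribution to $\nu$ to obtain the pure renewal equation $Z = z + Z\ast F$ in the notation of the excerpt, where $Z(t)=\mathbb{E}_{\nu}[f(X^{\nu}_t)]$, $z(t)=\mathbb{E}_{\nu}[f(X^{\nu}_t),T^{\nu}_1>t]$ and $F(t)=\mathbb{P}_{\nu}(T^{\nu}_1\le t)$. Since on $\{T^{\nu}_1>t\}$ the process is just the killed diffusion, $z(t)=\int_D f(y)p^D(t,\nu,y)\,dy$ is continuous in $t$ and bounded by $\|f\|_{\infty}\mathbb{P}_{\nu}(T^{\nu}_1>t)$, which decays exponentially by strict positivity of the principal Dirichlet eigenvalue $\lambda_0^D$; this makes $z$ directly Riemann integrable (after splitting $f=f^{+}-f^{-}$) and also shows $m^{\nu}:=\mathbb{E}_{\nu}[T^{\nu}_1]=\int_0^{\infty}\mathbb{P}_{\nu}(T^{\nu}_1>s)\,ds$ is finite. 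Fubini together with the identity $\int_0^{\infty}p^D(s,\nu,y)\,ds=g^D(\nu,y)$ then gives $m^{\nu}=\int_D g^D(\nu,y)\,dy$. The Key Renewal Theorem delivers
\begin{equation*}
\lim_{t\to\infty}\mathbb{E}_{\nu}\bigl[f(X^{\nu}_t)\bigr] = \frac{1}{m^{\nu}}\int_0^{\infty}z(s)\,ds = \frac{1}{m^{\nu}}\int_D f(y)\,g^D(\nu,y)\,dy = \int_D f(y)\,\mu^{\nu}(dy).
\end{equation*}

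Finally I would return to the delayed renewal equation: the leading term $\mathbb{E}_x[f(X^{\nu}_t),T^{\nu}_1>t]$ vanishes as $t\to\infty$ by the same exponential decay of the survival probability, while the convolution term converges to the same limit $\int_D f\,d\mu^{\nu}$ by dominated convergence, using $\mathbb{P}_x(T^{\nu}_1\le t)\to 1$ and the limit established for $\nu$-initial data. This proves ergodicity for every $x\in D$ and every $f\in C(D)$ bounded, and Lemma~\ref{filem} then identifies $\mu^{\nu}$ as the unique invariant distribution of $X^{\nu}$. The main technical point I expect to be careful with is the verification of direct Riemann integrability in the Key Renewal Theorem, but this ultimately reduces to the exponential tail bound on $T^{\nu}_1$ coming from $\lambda_0^D>0$, so no deep new estimate is required.
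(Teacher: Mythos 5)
Your proposal is correct and follows essentially the same renewal-theoretic route as the paper: condition on $T^{\nu}_1$ to get the delayed renewal equation, reduce to the pure renewal equation under $\mathbb{P}_{\nu}$, apply the Key Renewal Theorem to identify the limit with $\frac{1}{m^{\nu}}\int_0^\infty z(s)\,ds = \int_D f\,d\mu^{\nu}$, and then pass back through the delayed equation for arbitrary starting points. You are slightly more explicit than the paper in justifying direct Riemann integrability via the exponential decay of $\mathbb{P}_{\nu}(T^{\nu}_1>t)$ and in spelling out the dominated-convergence step for the convolution term, but the decomposition and the key lemma are the same.
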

It is natural to ask whether it is possible to obtain more information from renewal theory. At this, the speed of convergence to the invariant measure is of main interest. If the convergence is geometrically fast with respect to the total-variation norm, the Markov process is called 
geometrically ergodic. For general renewal processes, two features of the inter-arrival distribution $F$ are necessary and 
sufficient for geometric ergodicity: 
Firstly, $F$ must be a spread out distribution (for a definition see \cite{As}), since otherwise even convergence in total variation fails to be true. Secondly, $F$ must have exponentially decreasing tails (see e.g. \cite{As}). In our setting the inter-arrival distribution $\mathbb{P}_{\nu}\bigl(T_{1}^{\nu}\in\cdot\bigr)$ share both properties, since $\mathbb{P}_{\nu}\bigl(T_{1}^{\nu}\in\cdot\bigr)$ even admits a density.  
 
Hence [2,Theorem 2.10] allows to refine \eqref{homren-1} to the assertion
\begin{equation}\label{geomergodicity}
Z(t)=U\ast z (t)=\frac{1}{m}\int_{0}^{\infty}z(s)ds + O(e^{-\epsilon t})
\end{equation}  
for some sufficiently small $\epsilon$. In this situation, the function $f$ in the definition of $z(t)$ is only required to be bounded and measurable.
Therefore, equation (\ref{geomergodicity}) implies geometric ergodicity of the Markov process $X_t$. Actually, the tails of $\mathbb{P}_{x}\bigl(T_{1}^{\nu}\in\cdot\bigr)$ are uniform-exponentially decreasing for $x\in(a,b)$, and hence uniform ergodicity can be deduced.\\
Despite these affirmative results, it is impossible to obtain good or even sharp convergence rates from general renewal theory directly, since the above $\epsilon$ has been deduced from a general coupling argument which does not fit to our particular situation. \\
Similar calculations to the ones presented above have already been used in previous works of Grigorescu and Kang \cite{GK1} as well as Ben-Ari and Pinsky \cite{BaP2} in order to show ergodicity of the process. Grigorescu and Kang \cite{GK1} consider the Laplace transform of an expression similar to (\ref{homren0}) and calculate the associated poles. As one expects from general operator theoretic principles the density of the equilibrium measure can be characterized as residuum of the pole at 0. Ben-Ari and Pinsky use a different but still quite analytic approach. A probabilistic approach using the Doeblin condition in a general setting is presented by Grigorescu and Kang in \cite{GK3}.

\section{An upper bound for the rate of convergence}
The next natural question consists of calculating the exponential convergence rate. Let us look at the one-dimensional case. In order to find the exponential rate of convergence the analyst usually works on the generator level and considers the following eigenvalue problem
\begin{equation}\label{evproblem}
-\frac{1}{2}u'' = \lambda u\,\text{  with 'boundary' condition  }\, u(a)=\int_a^b u(y)\,\nu(dy)=u(b).
\end{equation}
The exponential rate of convergence $\gamma_1(\nu)$ coincides with $\min\lbrace \Re \lambda \not = 0 \mid \exists\, u_{\lambda}\not=0 \text{  satisfying  }\eqref{evproblem}\rbrace$. Observe that \eqref{evproblem} includes the non-local boundary condition, which represents the jump mechanism of the process. Instead of investigating the stationary problem \eqref{evproblem}, we work directly on the path level. In this section we first prove a simple upper bound on the convergence rate in the case of the one-dimensional BMJB in an interval $D=(a,b)$. For this reason we introduce the reflection map $R$ defined by 
\begin{displaymath}
 R:(a,b) \rightarrow (a,b),\,\text{   }\, R(x):= a+b-x.
\end{displaymath}
Furthermore recall from section 1.1 that $(\lambda_k^{(a,b)})_{k=0}^{\infty}$ denotes the sequence of eigenvalues of the operator $-\frac{1}{2}\frac{d^2}{dx^2}$ in $(a,b)$ with Dirichlet boundary conditions at both endpoints.
\begin{proposition}\label{bound}
Let $d=1$ and let $(X_t)_{t\geq 0}$ denote the BMJB in $(a,b)$ with jump distribution $\nu$. Then we have 
\begin{displaymath}
\gamma_1 (\nu) \leq \lambda_1^{(a,b)}.
\end{displaymath}
\end{proposition}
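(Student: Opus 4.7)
My plan is to produce an explicit nonzero eigenfunction of the generator of $X_t^{\nu}$ with eigenvalue $\lambda_1^{(a,b)}$; together with the spectral characterization of $\gamma_1(\nu)$ recalled immediately after \eqref{evproblem}, this gives the upper bound directly.

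Write $\omega := 2\pi/(b-a)$, so that $\lambda_1^{(a,b)} = \omega^2/2$. Every solution of $-\tfrac{1}{2}u'' = \lambda_1^{(a,b)}u$ on $(a,b)$ has the form
\[
u(x) = A\sin(\omega(x-a)) + B\cos(\omega(x-a)),
\]
and, because the argument runs from $0$ at $x=a$ to $2\pi$ at $x=b$, one checks immediately that $u(a)=B=u(b)$, so the first half of the boundary condition in \eqref{evproblem} is automatic, irrespective of the choice of $A$ and $B$. Thus the only nontrivial constraint is the nonlocal one, $\int_a^b u(y)\,\nu(dy)=B$, which rearranges to the single linear equation
\[
B(1-c) = A\,s,\qquad s := \int_a^b\!\sin(\omega(y-a))\,\nu(dy),\qquad c := \int_a^b\!\cos(\omega(y-a))\,\nu(dy).
\]
Since $\nu$ is a probability measure supported in the \emph{open} interval $(a,b)$, the cosine integrand is strictly less than one on $\operatorname{supp}(\nu)$, hence $c<1$. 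Therefore the pair $(A,B)=(1-c,\,s)$ is nonzero and solves the equation, producing a genuine smooth eigenfunction of the generator at the eigenvalue $\lambda_1^{(a,b)}$, and the inequality $\gamma_1(\nu)\le\lambda_1^{(a,b)}$ follows.

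There is no real obstacle in this argument. The key structural observation is that at the \emph{second} Dirichlet level the trigonometric argument sweeps a full period $2\pi$ on $[a,b]$, so that $u(a)=u(b)$ holds automatically and leaves two free coefficients to absorb the single linear constraint produced by $\nu$. The one mildly delicate point is ruling out the degenerate case $c=1$, which can occur only when $\nu$ is concentrated on $\{a,b\}$; this is precisely excluded by $\operatorname{supp}(\nu)\subset(a,b)$. If desired, the spectral characterization used at the end can itself be justified via the duality $\mathcal{S}_t^{\nu\ast}=\mathcal{P}_t^{\nu}$ from \eqref{duality} and the compactness of $\mathcal{S}_t^{\nu}$ quoted from \cite{BaP1}, together with the spectral mapping theorem.
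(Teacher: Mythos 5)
Your proof is correct, but it takes a route the paper deliberately avoids. You solve the generator-level eigenvalue problem \eqref{evproblem} explicitly: at $\lambda=\lambda_1^{(a,b)}=\omega^2/2$ with $\omega=2\pi/(b-a)$, the general solution $u=A\sin(\omega(\cdot-a))+B\cos(\omega(\cdot-a))$ automatically satisfies $u(a)=u(b)=B$ because the argument sweeps a full period, so the nonlocal condition $\int u\,d\nu=B$ reduces to the single linear relation $B(1-c)=As$, which has the nonzero solution $(A,B)=(1-c,s)$ since $c=\int\cos(\omega(y-a))\,\nu(dy)<1$ for any probability measure with $\nu(\{a,b\})=0$. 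This exhibits a nonzero eigenvalue with real part $\lambda_1^{(a,b)}$, and the characterization $\gamma_1(\nu)=\min\{\Re\lambda\neq 0:\exists\,u_\lambda\neq 0\text{ satisfying }\eqref{evproblem}\}$ gives the bound. The paper's proof is genuinely different and purely on the path level: it bounds $\sup_x\|\mathbb{P}_x(X_t\in\cdot)-\mu\|_{TV}$ from below by $\tfrac{1}{2}\|\mathbb{P}_x(X_t\in\cdot)-\mathbb{P}_{R(x)}(X_t\in\cdot)\|_{TV}$, observes that the reflection symmetry $\mathbb{P}_x(T_1^\nu\in\cdot)=\mathbb{P}_{R(x)}(T_1^\nu\in\cdot)$ makes the post-jump contributions cancel exactly, and then reads off the decay rate $e^{-\lambda_1^{(a,b)}t}$ from the odd part of the Dirichlet heat kernel expansion. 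Your argument is shorter and makes the $\nu$-independence of the answer transparent algebraically, but it leans on the spectral characterization of $\gamma_1(\nu)$ via \eqref{evproblem}, which in turn requires the semigroup machinery (compactness of $\mathcal{S}_t^\nu$ and the spectral mapping theorem) imported from \cite{BaP1} and \cite{BaP2}. The paper explicitly sets out to bypass exactly this ("Instead of investigating the stationary problem \eqref{evproblem}, we work directly on the path level"), so while your derivation is sound and arguably the most efficient route to the inequality, it is the analytic argument the paper is trying to complement rather than reproduce; the path-level proof also sets up the reflection-symmetry coupling that is the crux of the matching lower bound in Theorem \ref{main}.
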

\begin{proof}
Due to the trivial inequality 
\begin{equation}\label{lowerbound1}
 \bigl\|\mathbb{P}_x(X_t \in \cdot\,)-\mathbb{P}_{R(x)}(X_t \in \cdot\,) \bigr\|_{TV} \leq 2 \sup_{x \in (a,b)}\bigl\|\mathbb{P}_x(X_t\in \cdot\,)- \mu\bigr\|_{TV}
\end{equation}
it is enough to estimate the left side of \eqref{lowerbound1} from below. Observe first that by symmetry we have
\begin{equation}\label{symmetry1}
 \mathbb{P}_x(T^{\nu}_1 \in \cdot ) = \mathbb{P}_{R(x)}(T^{\nu}_1 \in \cdot).
\end{equation}
Using \eqref{symmetry1} in the last step we obtain
\begin{equation*}
\begin{split}
 \mathbb{P}_{x}(X_t \in A)-\mathbb{P}_{R(x)}(X_t \in A) &= \mathbb{P}_x(X_t \in A;\,T_1^{\nu} > t)-\mathbb{P}_{R(x)}(X_t \in A;\,T_1^{\nu} > t) \\
 &+ \int_0^t\mathbb{P}_{\nu}(X_{t-r} \in A)\mathbb{P}_x(T^{\nu}_1 \in dr) -  \int_0^t\mathbb{P}_{\nu}(X_{t-s} \in A)\mathbb{P}_{R(x)}(T^{\nu}_1 \in ds) \\
 &= \mathbb{P}_x(B_t \in A;\,T_{(a,b)} > t)-\mathbb{P}_{R(x)}(B_t \in A;\,T_{(a,b)} > t), 
\end{split}
\end{equation*}
where $(B_t)_{t \geq 0}$ is a one-dimensional Brownian motion and $T_{(a,b)}=\inf\lbrace t > 0\mid B_t \notin (a,b)\rbrace$. From the spectral decomposition of the killed BM in $(a,b)$, $\varphi^{(a,b)}_{2i}(x)=\varphi^{(a,b)}_{2i}(R(x))$ and $\varphi^{(a,b)}_{2i+1}(x)=-\varphi^{(a,b)}_{2i+1}(R(x))$ for all $i\in\mathbb{N}\cup\{0\}$ we deduce
\begin{equation*}
\begin{split}
\mathbb{P}_{x}(X_t \in A)&-\mathbb{P}_{R(x)}(X_t \in A) = \mathbb{P}_x(B_t \in A;\,T_{(a,b)} > t)-\mathbb{P}_{R(x)}(B_t \in A;\,T_{(a,b)} > t)\\
&= \sum_{i=0}^{\infty}e^{-\lambda_i^{(a,b)}t}\varphi^{(a,b)}_i(x)\int_A\varphi^{(a,b)}_i(y)\,dy -\sum_{i=0}^{\infty}e^{-\lambda_i^{(a,b)}t}\varphi^{(a,b)}_i(R(x))\int_A\varphi^{(a,b)}_i(y)\,dy \\
&= 2\sum_{i=0}^{\infty}e^{-\lambda_{2i+1}^{(a,b)}t}\varphi^{(a,b)}_{2i+1}(x)\int_A\varphi^{(a,b)}_{2i+1}(y)\,dy.
\end{split}
\end{equation*}
This together with \eqref{lowerbound1} clearly proves the assertion of the proposition.
\end{proof}
One should keep in mind the important role of the symmetry $(x,R(x))$ used heavily in Proposition \eqref{bound}. This symmetry will occur also later in the proof of the matching upper bound.

Thus the straightforward arguments used in this section demonstrate in a rather direct way that in the case of a one-dimensional BMJB with jump distribution $\nu$ 
\begin{equation}\label{lambda01}
\gamma_1(\nu) \leq \lambda_1^{(a,b)}.
\end{equation}
%In order to calculate $\gamma_1(\nu)$ precisely more work is necessary. This problem will be adressed in the following sections. \\
\\
Before we start with a more precise investigation of the spectral gap for general jump distributions we ask whether the upper bound in \eqref{lambda01} is sharp at least in a very special situation.
We have seen that the $L$-diffusion with jump distribution $\nu$ is ergodic and the unique invariant distribution $\mu^{\nu}$ has the form
\begin{displaymath}\label{maybesharp}
 \mu^{\nu}(dy) = \frac{1}{m^{\nu}}\int_Dg^{D}(x,y)\,\nu(dx)\,dy,
\end{displaymath}
where $m^{\nu} = \int_D\int_Dg^{D}(x,y)\,\nu(dx)\,dy$. 
It seems reasonable that the speed of convergence to equilibrium should be fast whenever jump- and invariant distribution coincide. This immediately suggest the question for the existence of a jump measure such that $\nu = \mu^{\nu}$. For simplicity let $L$ now be a symmetric operator, for which the theory of quasi-limiting distributions is more transparent. Still the result also holds up to obvious modifications in the non-symmetric case, but the corresponding results on the quasilimiting behavior are scattered widely in the literature. It turns out that there is a suitable jump-measure, namely the \textit{quasistationary distribution} for the killed $L$-diffusion in $D$, which is given by $\rho_D:=\varphi_0^D(y)\,dy$ (normalized to give total mass $1$). The notation quasi-stationary refers to the fact that for measurable $A\subset D$ 
\begin{displaymath}
 \mathbb{P}_{\rho_D}\bigl(W_t \in A \mid T_D > t\bigr) = \int_A\varphi^D_0(y)\,dy =\rho_D(A),
\end{displaymath}
where $(W_t)_{t \geq 0}$ denotes an $L$-diffusion and $T_D=\inf \lbrace t >0 \mid W_t \in \partial D\rbrace$. This fact has already been noticed by Ben-Ari and Pinsky in Theorem 1 of \cite{BaP1}, where it has also been shown that the quasi-stationary distribution defines the unique fixed point of the equation $\mu^{\nu} = \nu$.

Hence if (\ref{lambda01}) would be tight, it should be at least tight for the BMJB in $D$ with jump distribution $\nu=\rho_D$. That
this is actually the case is the content of the following
\begin{proposition}[\cite{BaP1}, Theorem 1]
Let $L$ be a symmetric operator in $D$ and let the jump distribution $\nu$ be given by the quasi-stationary distribution $\rho_D(dy)=\varphi_0^{D}(y)\,dy$. Then we have 
\begin{displaymath}
 \gamma_1(\rho_D) = \lambda_1^D.
\end{displaymath}
\end{proposition}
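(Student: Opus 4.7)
The plan rests on one exceptional feature of $\nu=\rho_D$: by the defining property of the quasi-stationary distribution, the killed $L$-diffusion issued from $\rho_D$ satisfies $\mathbb{P}_{\rho_D}(T_D>t)=e^{-\lambda_0^D t}$ and $\mathbb{P}_{\rho_D}(W_t\in\cdot\mid T_D>t)=\rho_D$. Thus under $\mathbb{P}_{\rho_D}$ the interarrival times $T_i^{\rho_D}-T_{i-1}^{\rho_D}$ are i.i.d.\ exponential with rate $\lambda_0^D$ and the process restarts from $\rho_D$ after every jump, so that $\mathbb{E}_{\rho_D}[f(X_s^{\rho_D})]=\langle\rho_D,f\rangle$ for every $s\geq 0$. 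This recovers in particular $\mu^{\rho_D}=\rho_D$ in agreement with Theorem \ref{first}.

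Starting from an arbitrary $x\in D$ and decomposing as in \eqref{homren0},
\begin{equation*}
\mathcal{P}_t^{\rho_D}f(x)=\mathbb{E}_x\bigl[f(B_t);\,T_D>t\bigr]+\int_0^t\mathbb{E}_{\rho_D}\bigl[f(X_{t-s}^{\rho_D})\bigr]\,\mathbb{P}_x(T_D\in ds),
\end{equation*}
the invariance just observed collapses the integral and yields the clean identity
\begin{equation*}
\mathcal{P}_t^{\rho_D}f(x)-\langle\rho_D,f\rangle=\mathbb{E}_x\bigl[f(B_t);\,T_D>t\bigr]-\langle\rho_D,f\rangle\,\mathbb{P}_x(T_D>t).
\end{equation*}

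Next I would insert the spectral expansion $p^D(t,x,y)=\sum_{k\geq 0}e^{-\lambda_k^D t}\varphi_k^D(x)\varphi_k^D(y)$ into both terms on the right. Since $\rho_D=\varphi_0^D/\langle\varphi_0^D,1\rangle$, a direct computation shows that the $k=0$ contributions cancel exactly, so the difference equals
\begin{equation*}
\sum_{k\geq 1}e^{-\lambda_k^D t}\varphi_k^D(x)\Bigl(\langle\varphi_k^D,f\rangle-\langle\rho_D,f\rangle\,\langle\varphi_k^D,1\rangle\Bigr).
\end{equation*}
The lower bound $\gamma_1(\rho_D)\geq\lambda_1^D$ follows once the sup over $x\in D$ of this series is controlled. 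For the matching upper bound, take $f=\varphi_1^D$: orthogonality $\langle\varphi_0^D,\varphi_1^D\rangle=0$ forces the bracket for $k=1$ to reduce to $\langle\varphi_1^D,\varphi_1^D\rangle\neq 0$, giving exact decay rate $\lambda_1^D$ at any $x$ with $\varphi_1^D(x)\neq 0$; this yields $\gamma_1(\rho_D)\leq\lambda_1^D$ via the dual characterization of the spectral gap.

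The main obstacle is upgrading this $L^2$ spectral identity to the uniform-in-$x$ and sup-norm-in-$f$ control required by the definition \eqref{gamma} of $\gamma_1$. This is handled by standard parabolic regularity on the $C^{2,\alpha}$-domain $D$: for $t\geq t_0>0$ the kernel $p^D(t,\cdot,\cdot)$ admits a uniformly convergent eigenfunction expansion, and sup-norm estimates on $\varphi_k^D$ together with the super-polynomial decay of $e^{-\lambda_k^D t}$ legitimise exchanging summation with the suprema, completing the proof that $\gamma_1(\rho_D)=\lambda_1^D$.
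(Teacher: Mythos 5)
Your proof shares the paper's central idea: decompose $\mathcal{P}_t^{\rho_D}f(x)-\langle\rho_D,f\rangle$ into the surviving part $\mathbb{E}_x[f(B_t);T_D>t]$ minus $\langle\rho_D,f\rangle\,\mathbb{P}_x(T_D>t)$, exploiting that after the first jump the process is already in equilibrium. The paper then reads off the decay rate by citing the known Yaglom-limit result $-\lim_t \frac{1}{t}\log\|\mathbb{P}_x(B_t\in\cdot\mid T_D>t)-\rho_D\|_{TV}=\lambda_1^D-\lambda_0^D$ (Pinsky, Gong--Qian--Zhao) and multiplying by the $e^{-\lambda_0^D t}$ rate of $\mathbb{P}_x(T_D>t)$; you instead insert the spectral expansion of $p^D(t,\cdot,\cdot)$ directly, observe that the $k=0$ modes cancel because $\rho_D\propto\varphi_0^D$, and conclude. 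These are not structurally different arguments -- your spectral computation is precisely the standard proof of the Yaglom rate the paper cites -- but your version is more self-contained, and the choice $f=\varphi_1^D$ for the sharp upper bound is a clean touch: by orthogonality the bracket collapses so that $\mathcal{P}_t^{\rho_D}\varphi_1^D(x)-\langle\rho_D,\varphi_1^D\rangle=e^{-\lambda_1^D t}\varphi_1^D(x)$ exactly. The sup-norm control you invoke at the end is indeed available (it is Lemma \ref{uniformconvergence} of the paper), so the interchange of summation and supremum is justified. The argument is correct.
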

\begin{proof}
The proof differs from the one given in \cite{BaP1} and seems to be more probabilistic to us. Let $(X_t)_{t\geq 0}$ denote the L-diffusion in $D$ with jump distribution $\nu(dy) = \varphi_{0}(y)\,dy$.
For measurable $A$ we decompose $\mathbb{P}_x(X_t \in  A)$ according to whether there has been a jump before time $t$:
\begin{equation*}
\begin{split}
 \mathbb{P}_x(X_t \in  A) &= \mathbb{P}_x(X_t \in A, T_1^{\nu} \leq t) +  \mathbb{P}_x(X_t \in A, T_1^{\nu} > t) \\
&= \int_0^t\mathbb{P}_{\nu}\bigl(X_{t-s}\in A\big)\mathbb{P}_x(T_1^{\nu} \in ds) +\mathbb{P}_x(X_t \in A \mid T_1^{\nu} > t) \mathbb{P}_x(T_1^{\nu} > t) \\
&= \nu(A)-\nu(A)\mathbb{P}_x(T_1^{\nu} \geq t) + \mathbb{P}_x(X_t \in A \mid T_1^{\nu} > t) \mathbb{P}_x(T_1^{\nu} > t) \\
&= \nu(A)+ \bigl(\mathbb{P}_x(X_t \in A \mid T_1^{\nu} > t)-\nu(A)\bigr) \mathbb{P}_x(T_1^{\nu} > t).
\end{split}
\end{equation*}
Thus observing  that $-\lim_{t \rightarrow \infty}\frac{1}{t}\log\| \mathbb{P}_x(X^{\nu}_t \in \cdot \mid T_1^{\nu} > t)-\nu(\cdot)\|_{TV}= \lambda_1^D-\lambda_0^D$ (see e.g. \cite{P} and \cite{GQZ}) we arrive at the assertion of the Lemma.
\end{proof}

\section{Continuity of $\gamma_1(\nu)$}
Rather simple symmetry-arguments have been used to prove that $\gamma_1(\nu) \leq \lambda_1$. For the exact evaluation of $\gamma_1(\nu)$ a deeper analysis is necessary. One natural approach consists of reducing the problem to 'simple' jump distributions by certain continuity considerations. The fact that the equilibrium measure $\mu^{\nu}$ depends continuously on $\nu$ with respect to the weak topology raises the question, whether the same is true for the spectral gap $\lambda(\nu)$. The answer is affirmative and we want to stress that the results of this chapter are valid in the multi-dimensional situation.

Instead of dealing with $\mathcal{P}_t^{\nu}$ directly, we investigate the behavior of the semigroup $\mathcal{S}^{\nu}_t$ defined in (\ref{dualsemigroup}), where we know that $\mathcal{S}^{\nu\ast}_t=\mathcal{P}_t^{\nu}$. The main reason for this procedure is that in contrast to $\mathcal{P}_t^{\nu}$, $\mathcal{S}^{\nu}_t$ turns out to be strongly continuous (see \cite{BaP1} and \cite{BaP2}) so that the general theory of strongly continuous semigroups (see e.g. \cite{EN}) can be applied. For example, this makes it possible to verify compactness for $S^{\nu}_t$
and to relate its spectrum to the spectrum of its generator by application of known results of semigroup theory (see e.g. \cite{BaP2}).   
The following two basic results from operator-theoretic perturbation theory, which are due to Kato, will be applied later on to the semigroup $\mathcal{S}^{\nu}_t$.
\begin{theorem}[\cite{Kato}, p.178]\label{Kato0}
Let $\mathcal{T}$ be a closed operator in the Banach space $X$ and let $\sigma (\mathcal{T})$ be separated into two parts $\Sigma'(\mathcal{T})$ and $\Sigma''(\mathcal{T})$ by a closed curve $\Gamma$, i.e. a rectifiable, simple closed curve $\Gamma$ can be drawn so as to enclose an open set containing $\Sigma'(\mathcal{T})$ in its interior and $\Sigma''(\mathcal{T})$ in its exterior. Then one has a decomposition of $\mathcal{T}$ according to the decomposition of $X=M'(\mathcal{T})\oplus M''(\mathcal{T})$ of the space in such a way that the spectra of the parts $\mathcal{T}_{M'}$ and $\mathcal{T}_{M''}$ coincide with $\Sigma'(\mathcal{T})$ and $\Sigma''(\mathcal{T})$, respectively and $\mathcal{T}_{M'} \in B(X)$.
\end{theorem}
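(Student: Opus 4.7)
The natural approach is the Riesz/Dunford holomorphic functional calculus. Since $\Gamma$ lies in the resolvent set of $\mathcal{T}$ and is compact, the resolvent $R(\zeta):=(\zeta-\mathcal{T})^{-1}$ is operator-norm continuous and bounded along $\Gamma$, so the contour integral
\begin{equation*}
P \;:=\; \frac{1}{2\pi i}\oint_\Gamma R(\zeta)\,d\zeta
\end{equation*}
defines a bounded operator on $X$. The first step is to verify that $P$ is a projection. This is the classical computation based on the first resolvent identity $R(\zeta)R(\eta)=(R(\eta)-R(\zeta))/(\zeta-\eta)$: writing $P^{2}$ as a double integral over two concentric contours $\Gamma,\Gamma'$ both enclosing $\Sigma'(\mathcal{T})$ and applying Cauchy's theorem separately in each variable yields $P^{2}=P$.

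Next I would set $M':=PX$ and $M'':=(I-P)X$, so that $X=M'\oplus M''$ topologically. Because $R(\zeta)$ commutes with $\mathcal{T}$ on $\mathrm{Dom}(\mathcal{T})$, so does $P$, and therefore both $M'$ and $M''$ are invariant under $\mathcal{T}$ in the sense that $\mathcal{T}\bigl(\mathrm{Dom}(\mathcal{T})\cap M^{\bullet}\bigr)\subseteq M^{\bullet}$ for $M^{\bullet}\in\{M',M''\}$. Denote by $\mathcal{T}_{M'}$ and $\mathcal{T}_{M''}$ the resulting restrictions. Using the algebraic identity $\mathcal{T}R(\zeta)=\zeta R(\zeta)-I$ inside the contour integral gives
\begin{equation*}
\mathcal{T}Px \;=\; \frac{1}{2\pi i}\oint_\Gamma \zeta\,R(\zeta)x\,d\zeta,\qquad x\in\mathrm{Dom}(\mathcal{T}),
\end{equation*}
and the right-hand side extends by continuity to all $x\in X$. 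Closedness of $\mathcal{T}$ then forces $PX\subseteq\mathrm{Dom}(\mathcal{T})$ and shows that $\mathcal{T}_{M'}$ is bounded on $M'$, which is the nontrivial boundedness assertion of the theorem.

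Finally I would identify the spectra. For $\zeta$ outside $\Gamma$ the functional calculus produces an explicit bounded inverse of $\zeta-\mathcal{T}_{M'}$ on $M'$, namely the restriction of $R(\zeta)$ to $M'$ (and symmetrically, for $\zeta$ inside $\Gamma$ but in the resolvent set of $\mathcal{T}$, a bounded inverse of $\zeta-\mathcal{T}_{M''}$ on $M''$). This gives $\Sigma(\mathcal{T}_{M'})\subseteq\Sigma'(\mathcal{T})$ and $\Sigma(\mathcal{T}_{M''})\subseteq\Sigma''(\mathcal{T})$; the reverse inclusions follow from the direct-sum decomposition, which forces $\Sigma(\mathcal{T})=\Sigma(\mathcal{T}_{M'})\cup\Sigma(\mathcal{T}_{M''})$.

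The main obstacle is the justification of the boundedness of $\mathcal{T}_{M'}$: for an unbounded $\mathcal{T}$ one must be careful that the manipulations inside the contour integral are legitimate, and then exploit the closedness of $\mathcal{T}$ together with continuity of $\zeta\mapsto\zeta R(\zeta)$ on the compact curve $\Gamma$ in order to pass from $\mathrm{Dom}(\mathcal{T})$ to all of $PX$. Once this is established, the spectral identifications in the final step are routine consequences of the functional calculus.
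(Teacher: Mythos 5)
This theorem is quoted by the paper directly from Kato (\cite{Kato}, p.\,178) without proof, so there is no ``paper's proof'' to compare against; what you have sketched is precisely the standard Riesz--Dunford projection argument that Kato himself gives, and it is correct in outline.

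One small imprecision is worth flagging in the step establishing $PX\subseteq\mathrm{Dom}(\mathcal{T})$ and the boundedness of $\mathcal{T}_{M'}$. You say the formula $\mathcal{T}Px=\frac{1}{2\pi i}\oint_\Gamma\zeta R(\zeta)x\,d\zeta$ ``extends by continuity to all $x\in X$''; but for a general closed operator $\mathcal{T}$ the domain $\mathrm{Dom}(\mathcal{T})$ need not be dense, so a density/continuity extension is not available. The clean argument is to approximate $Px=\frac{1}{2\pi i}\oint_\Gamma R(\zeta)x\,d\zeta$ by Riemann sums $S_n=\frac{1}{2\pi i}\sum_i R(\zeta_i)x\,\Delta\zeta_i$: each $S_n$ lies in $\mathrm{Dom}(\mathcal{T})$ because $R(\zeta_i)x$ does, $S_n\to Px$, and $\mathcal{T}S_n=\frac{1}{2\pi i}\sum_i\bigl(\zeta_iR(\zeta_i)-I\bigr)x\,\Delta\zeta_i\to\frac{1}{2\pi i}\oint_\Gamma\zeta R(\zeta)x\,d\zeta$ since $\zeta\mapsto\zeta R(\zeta)$ is norm-continuous on the compact curve $\Gamma$ and $\oint_\Gamma d\zeta=0$. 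Closedness of $\mathcal{T}$ then gives simultaneously $Px\in\mathrm{Dom}(\mathcal{T})$ and the integral formula for $\mathcal{T}Px$, from which boundedness of $\mathcal{T}_{M'}$ follows as you claim. You do invoke closedness, so the essential idea is present; just route it through the Riemann-sum approximation rather than a density argument. The remainder of your plan (idempotence of $P$ via the first resolvent identity on two nested contours, invariance of $M'$ and $M''$ under $\mathcal{T}$, and the spectral identification via explicit resolvents for $\zeta$ outside $\Gamma$ and the direct-sum relation $\Sigma(\mathcal{T})=\Sigma(\mathcal{T}_{M'})\cup\Sigma(\mathcal{T}_{M''})$) is the standard and correct route.
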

In the following $\hat{\delta}(\cdot,\cdot)$ denotes the metric which defines the generalized convergence of closed operators (compare \cite{Kato} p. 202). 
\begin{theorem}[\cite{Kato}, p.212]\label{Kato}
Let $\mathcal{T}$ be a closed operator in the Banach space $X$ and let $\sigma (\mathcal{T})$ be separated into two parts $\Sigma'(T)$ and $\Sigma''(\mathcal{T})$ by a closed curve $\Gamma$, i.e. a rectifiable, simple closed curve $\Gamma$ can be drawn so as to enclose an open set containing $\Sigma'(\mathcal{T})$ in its interior and $\Sigma''(\mathcal{T})$ in its exterior. Let $X=M'(\mathcal{T}) \oplus M''(\mathcal{T})$ be the associated composition of $X$. Then there exists a $\delta > 0$, depending on $\mathcal{T}$ and $\Gamma$ with the following properties. Any closed operator $\tilde{\mathcal{T}}$ on $X$ with $\hat{\delta}(\tilde{\mathcal{T}},\mathcal{T})< \delta$ has spectrum $\sigma(\tilde{\mathcal{T}})$ likewise separated by $\Gamma$ into two parts $\Sigma'(\tilde{\mathcal{T}})$ and $\Sigma''(\tilde{\mathcal{T}})$. In the associated decomposition $X=M'(\tilde{\mathcal{T}}) \oplus M''(\tilde{\mathcal{T}})$, $M'(\tilde{\mathcal{T}})$ and $M''(\tilde{\mathcal{T}})$ are respectively isomorphic with $M'(\mathcal{T})$ and $M''(\mathcal{T})$. In particular $\dim M'(\tilde{\mathcal{T}})=\dim M'(\mathcal{T})$ and $\dim M''(\tilde{\mathcal{T}})  = \dim M''(\mathcal{T})$ and both $\Sigma(\tilde{\mathcal{T}})$ and $\Sigma''(\tilde{\mathcal{T}})$ are non-empty if this is true for $\mathcal{T}$. The decomposition $X=M'(\tilde{\mathcal{T}}) \oplus M''(\tilde{\mathcal{T}})$ is continuous in $\tilde{\mathcal{T}}$ in the sense that the projection $P[\tilde{\mathcal{T}}]$ of $X$ onto $M'(\tilde{\mathcal{T}})$ along $M''(\tilde{\mathcal{T}})$ tends to $P[\mathcal{T}]$ in norm as $\hat{\delta}(\tilde{\mathcal{T}},T) \rightarrow 0$. 
\end{theorem}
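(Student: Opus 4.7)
The statement is a classical result from Kato's monograph, so the plan is to reconstruct the standard Riesz-projection argument rather than invent something new.

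The starting point is the spectral projection
\[
P[\mathcal{T}] = \frac{1}{2\pi i}\oint_\Gamma (\zeta-\mathcal{T})^{-1}\,d\zeta,
\]
which is well-defined because $\Gamma$ lies entirely in the resolvent set $\rho(\mathcal{T})$ and $\zeta\mapsto(\zeta-\mathcal{T})^{-1}$ is continuous (indeed holomorphic) on a neighbourhood of $\Gamma$. First I would verify the purely algebraic facts: $P[\mathcal{T}]$ is a bounded projection (via the resolvent identity and Fubini on a pair of nested contours around $\Gamma$), it commutes with $\mathcal{T}$, and the resulting decomposition $X = M'(\mathcal{T})\oplus M''(\mathcal{T})$ with $M'(\mathcal{T})=\operatorname{ran}P[\mathcal{T}]$ splits $\mathcal{T}$ so that $\mathcal{T}_{M'}\in B(X)$ has spectrum $\Sigma'(\mathcal{T})$ and $\mathcal{T}_{M''}$ has spectrum $\Sigma''(\mathcal{T})$. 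This recovers the content of Theorem \ref{Kato0} en route.

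The heart of Theorem \ref{Kato} is continuous dependence of $P[\mathcal{T}]$ on $\mathcal{T}$ in the gap topology. The key analytic input is that if $\hat{\delta}(\widetilde{\mathcal{T}},\mathcal{T})<\delta$ with $\delta$ small enough, then for every $\zeta\in\Gamma$ the operator $\zeta-\widetilde{\mathcal{T}}$ is invertible and
\[
\bigl\|(\zeta-\widetilde{\mathcal{T}})^{-1}-(\zeta-\mathcal{T})^{-1}\bigr\|
\;\le\; C(\mathcal{T},\Gamma)\,\hat{\delta}(\widetilde{\mathcal{T}},\mathcal{T}),
\]
uniformly on the compact curve $\Gamma\subset\rho(\mathcal{T})$. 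This is the substantive step and I would deduce it from the characterisation of $\hat{\delta}$ as a gap between graphs, combined with the standard Neumann-series perturbation of $(\zeta-\mathcal{T})^{-1}$; the uniformity on $\Gamma$ follows since $\zeta\mapsto\|(\zeta-\mathcal{T})^{-1}\|$ is continuous on the compact set $\Gamma$. Integrating along $\Gamma$ then yields $\|P[\widetilde{\mathcal{T}}]-P[\mathcal{T}]\|\to 0$ as $\hat{\delta}(\widetilde{\mathcal{T}},\mathcal{T})\to 0$, which is exactly the last assertion of the theorem.

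Once the projections are close in norm, the remaining claims are essentially soft. For any two projections with $\|P-Q\|<1$, the operator $U := QP + (I-Q)(I-P)$ is invertible (by a Neumann series on $I - U$) and conjugates $P$ to $Q$; restricted to the ranges this gives the isomorphisms $M'(\widetilde{\mathcal{T}})\simeq M'(\mathcal{T})$ and $M''(\widetilde{\mathcal{T}})\simeq M''(\mathcal{T})$, hence equality of dimensions. Since $P[\widetilde{\mathcal{T}}]$ commutes with $\widetilde{\mathcal{T}}$, the decomposition reduces $\widetilde{\mathcal{T}}$ and, by construction of $P[\widetilde{\mathcal{T}}]$ as a contour integral over the same $\Gamma$, separates $\sigma(\widetilde{\mathcal{T}})$ into the parts lying inside and outside $\Gamma$, giving $\Sigma'(\widetilde{\mathcal{T}})$ and $\Sigma''(\widetilde{\mathcal{T}})$. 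Non-emptiness is inherited because the corresponding range spaces have the same (non-zero) dimension as for $\mathcal{T}$.

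The main obstacle is the resolvent perturbation estimate in the gap metric, since unboundedness allows $\widetilde{\mathcal{T}}$ and $\mathcal{T}$ to have different domains; the graph-theoretic definition of $\hat{\delta}$ is precisely what allows one to circumvent the absence of a common domain, but pushing this through to a quantitative bound on $\|(\zeta-\widetilde{\mathcal{T}})^{-1}-(\zeta-\mathcal{T})^{-1}\|$ uniformly on $\Gamma$ is the delicate part of the argument. Everything else in the theorem is then a consequence of classical contour-integral and projection manipulations.
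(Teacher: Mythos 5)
The paper offers no proof of this statement; it is cited verbatim from Kato's monograph (p.~212). Your reconstruction follows Kato's own argument faithfully: Riesz projection via contour integration over $\Gamma$, a uniform resolvent perturbation bound on the compact curve $\Gamma\subset\rho(\mathcal{T})$ controlled by the gap metric $\hat\delta$, norm-convergence of the projections, and then the soft conjugation argument $U=QP+(I-Q)(I-P)$ to transfer the decomposition and dimensions. You have also correctly isolated the genuinely delicate step, namely turning closeness in $\hat\delta$ into a quantitative bound on $\|(\zeta-\widetilde{\mathcal{T}})^{-1}-(\zeta-\mathcal{T})^{-1}\|$ when the domains of the two closed operators differ; in Kato this is handled by the resolvent characterization of generalized convergence (his Theorem IV-2.25 and the surrounding material), and any complete write-up would need to spell out that reduction rather than appeal to a Neumann series directly on the operators. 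As a proof sketch your proposal is correct and takes the same route as the cited source.
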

In order to apply Theorem \ref{Kato} observe that for a bounded operator $T$ and a sequence of closed operators $(\mathcal{T}_n)_n$ we have by Theorem 2.23 in \cite{Kato} that $\lim_{n \rightarrow \infty}\hat{\delta}(\mathcal{T}_n , \mathcal{T})=0$ if and only if the operators $\mathcal{T}_n$ are bounded for $n$ large enough and $\lim_{n \rightarrow \infty}\|\mathcal{T}_n -\mathcal{T}\| = 0$. 
Now, Theorems \ref{Kato0} and \ref{Kato} immediately imply the following conclusion. 
\begin{Corollary}\label{CKato}
Assume that $(\mathcal{T}_n)_{n \in \mathbb{N}}$ is a sequence of compact operators in a Banach space $X$ which converges with respect to the operator norm to a (necessarily compact) operator $\mathcal{T}$ and let $\Gamma$ be a curve such that $\Gamma$ divides the spectrum $\Sigma(\mathcal{T})$ of $\mathcal{T}$ into a two parts, one finite part $\Sigma'(\mathcal{T})$ lying inside the curve with no limit point and one part $\Sigma''(\mathcal{T})$ lying outside this curve. Then for $n$ large enough the spectra $\Sigma(\mathcal{T}_n)$ of the operators $\mathcal{T}_n$ are also separated by $\Gamma$ and since $\dim M'(\mathcal{T}_n) = \dim M'(\mathcal{T})< \infty$ and since $\mathcal{T}_nP[\mathcal{T}_n]$ converges in operator norm to $\mathcal{T}P[\mathcal{T}]$ as $n \rightarrow \infty$ we also have $\Sigma'(T_n) \rightarrow \Sigma'(\mathcal{T})$ as $n \rightarrow \infty$ with respect to the Hausdorff metric and in the following sense: 
\begin{displaymath}
\forall \varepsilon > 0\,\exists N \in\mathbb{N} \,\forall\lambda \in \Sigma'(\mathcal{T})\,\forall n \geq N \,\exists \lambda_1^n,\dots,\lambda_{k_n}^n \in \Sigma'(\mathcal{T}_n): \max_{l\in 1,...,k_n}|\lambda-\lambda^n_{l}|< \varepsilon.
\end{displaymath}
Furthermore, If $m$ ($m_{l}^n$) denotes the algebraic multiplicity of the eigenvalue $\lambda$ ($\lambda^n_{l}$) of $\mathcal{T}$ ($\mathcal{T}_n$), then $\sum_{l=1}^{k_n}m^n_{l}=m$.
\end{Corollary}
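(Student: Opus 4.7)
The plan is to deduce this corollary directly from Theorems \ref{Kato0} and \ref{Kato}, with a short additional finite-dimensional argument to pass from the abstract subspace decomposition to the quantitative Hausdorff-type statement about the eigenvalues.

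First I would observe that since each $\mathcal{T}_n$ and $\mathcal{T}$ are bounded, the remark recalled just before the corollary (``Theorem 2.23 in \cite{Kato}'') gives $\hat{\delta}(\mathcal{T}_n,\mathcal{T})\to 0$ from the norm convergence hypothesis. Next, $\mathcal{T}$ being compact forces $\Sigma(\mathcal{T})$ to be countable with $0$ as only possible accumulation point; combined with the hypothesis that $\Sigma'(\mathcal{T})$ has no limit point and is enclosed by a rectifiable curve $\Gamma$, this forces $\Sigma'(\mathcal{T})$ to be finite and consequently $\dim M'(\mathcal{T})<\infty$. Applying Theorem \ref{Kato} then yields, for all $n$ sufficiently large, a decomposition $X=M'(\mathcal{T}_n)\oplus M''(\mathcal{T}_n)$ with $\Gamma$ separating $\Sigma'(\mathcal{T}_n)$ from $\Sigma''(\mathcal{T}_n)$, with $\dim M'(\mathcal{T}_n)=\dim M'(\mathcal{T})$, and with $P[\mathcal{T}_n]\to P[\mathcal{T}]$ in operator norm.

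From here I would pass to the finite-rank operators $\mathcal{T}_nP[\mathcal{T}_n]$ and $\mathcal{T}P[\mathcal{T}]$. Norm convergence of $\mathcal{T}_nP[\mathcal{T}_n]$ to $\mathcal{T}P[\mathcal{T}]$ follows from
\begin{equation*}
\bigl\|\mathcal{T}_nP[\mathcal{T}_n]-\mathcal{T}P[\mathcal{T}]\bigr\|\le \bigl\|\mathcal{T}_n-\mathcal{T}\bigr\|\cdot\bigl\|P[\mathcal{T}_n]\bigr\|+\bigl\|\mathcal{T}\bigr\|\cdot\bigl\|P[\mathcal{T}_n]-P[\mathcal{T}]\bigr\|,
\end{equation*}
together with the uniform boundedness of $\|P[\mathcal{T}_n]\|$ (a consequence of their norm convergence). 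By the decomposition statement of Theorem \ref{Kato0}, the nonzero spectrum of $\mathcal{T}_nP[\mathcal{T}_n]$ (respectively $\mathcal{T}P[\mathcal{T}]$) equals $\Sigma'(\mathcal{T}_n)$ (respectively $\Sigma'(\mathcal{T})$), with algebraic multiplicities preserved.

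To extract the Hausdorff-type convergence of the spectra, I would use a basis of $M'(\mathcal{T})$ and, for $n$ large, the isomorphism $M'(\mathcal{T}_n)\cong M'(\mathcal{T})$ furnished by $P[\mathcal{T}_n]$ restricted to $M'(\mathcal{T})$ (which is an isomorphism for large $n$ because $P[\mathcal{T}_n]\to P[\mathcal{T}]$ in norm and this restriction converges to the identity on $M'(\mathcal{T})$). In this basis, $\mathcal{T}_n\!\restriction_{M'(\mathcal{T}_n)}$ is represented by a sequence of matrices of fixed size $m=\dim M'(\mathcal{T})$ converging entry-wise to the matrix of $\mathcal{T}\!\restriction_{M'(\mathcal{T})}$. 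Continuity of roots of monic polynomials in their coefficients (applied to the characteristic polynomials) then gives the stated Hausdorff-convergence: for every $\varepsilon>0$ and $\lambda\in\Sigma'(\mathcal{T})$, all $n$ large enough admit eigenvalues $\lambda^n_1,\dots,\lambda^n_{k_n}$ of $\mathcal{T}_n$ within $\varepsilon$ of $\lambda$. The multiplicity identity $\sum_{l=1}^{k_n}m^n_l=m$ follows because the total number of eigenvalues of the matrices (with algebraic multiplicity) is $m$ for all $n$, while for $\varepsilon$ smaller than half the separation between points of $\Sigma'(\mathcal{T})$ the clusters around distinct $\lambda\in\Sigma'(\mathcal{T})$ are disjoint, so each cluster must account for exactly $m(\lambda)$ eigenvalues.

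The only non-mechanical point in the argument is verifying that the restrictions of $\mathcal{T}_n$ to $M'(\mathcal{T}_n)$ can be identified, via the Kato isomorphism, with a sequence of $m\times m$ matrices converging to that of $\mathcal{T}\!\restriction_{M'(\mathcal{T})}$; everything else is either a direct quotation of Theorems \ref{Kato0} and \ref{Kato} or a standard continuity-of-roots fact. I expect this identification to be the technical crux, but it reduces to showing that $P[\mathcal{T}_n]\!\restriction_{M'(\mathcal{T})}\to \mathrm{id}_{M'(\mathcal{T})}$ in norm, which is immediate from $P[\mathcal{T}_n]\to P[\mathcal{T}]$ and $P[\mathcal{T}]\!\restriction_{M'(\mathcal{T})}=\mathrm{id}$.
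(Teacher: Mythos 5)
Your proof is correct. The paper gives no written proof at all here --- it declares the corollary to follow ``immediately'' from Theorems \ref{Kato0} and \ref{Kato} after the preceding remark on Theorem 2.23 in \cite{Kato} --- and your argument is a faithful unpacking of exactly that intended route: convert norm convergence to convergence in the gap metric, apply Kato's separation and stability theorems to get the projections $P[\mathcal{T}_n]\to P[\mathcal{T}]$ in norm with $\dim M'(\mathcal{T}_n)=\dim M'(\mathcal{T})$, reduce to the finite-dimensional restricted operators (conjugating by $P[\mathcal{T}_n]\!\restriction_{M'(\mathcal{T})}$, which is eventually an isomorphism), and then invoke continuity of the roots of the characteristic polynomial together with a cluster-separation argument for the multiplicity count.
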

Now we turn to the application of these abstract results to the situation at hand. We want to stress that several of the following arguments are taken from \cite{BaP2}. We give rather detailed arguments since some care concerning the dependence of various estimates on the jump distribution is necessary.
 
We will need the following Lemmata. These are probably well-known, but we have not been able to locate them in the literature in the form we use them.
\begin{lemma}\label{uniformconvergence}
Let $D \subset \mathbb{R}^d$ be a bounded domain and let as above $(\lambda_n^D)_{n \in \mathbb{N}_0}$ and $(\varphi_n^{D})_{n \in \mathbb{N}_0}$ denote the sequence of eigenvalues (counted according to their multiplicities) and the associated eigenfunctions of the symmetric operator $L$
\begin{displaymath}
L:=\frac{1}{2}\sum_{i,j=1}^{d}\partial_i (a_{ij}(\cdot)\partial_j)
\end{displaymath}
in $D$ with Dirichlet boundary conditions, respectively. Then for any $\varepsilon > 0$ we have
\begin{equation}\label{sumfinite}
 \sum_{k=0}^{\infty}e^{-\lambda^D_k\varepsilon}\|\varphi^D_k\|_{\infty}^2 < \infty.
\end{equation}
\end{lemma}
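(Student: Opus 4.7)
The plan is to reduce the sum to the trace of the Dirichlet heat semigroup via a Cauchy--Schwarz argument applied to the eigenfunction identity. For any $t>0$ one has
\begin{equation*}
\varphi_{k}^{D}(x)=e^{\lambda_{k}^{D}t}(P^{D}_{t}\varphi_{k}^{D})(x)=e^{\lambda_{k}^{D}t}\int_{D}p^{D}(t,x,y)\varphi_{k}^{D}(y)\,dy.
\end{equation*}
Applying Cauchy--Schwarz in $y$, together with $\|\varphi_{k}^{D}\|_{2}=1$ and the identity $\int_{D}p^{D}(t,x,y)^{2}\,dy=p^{D}(2t,x,x)$ (Chapman--Kolmogorov plus symmetry of $p^{D}$, which is available here since $L$ is self-adjoint in $L^{2}(D)$), I obtain the pointwise bound
\begin{equation*}
|\varphi_{k}^{D}(x)|^{2}\leq e^{2\lambda_{k}^{D}t}\,p^{D}(2t,x,x).
\end{equation*}

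Next, I would invoke the well-known fact that on a bounded $C^{2,\alpha}$-domain the uniformly elliptic Dirichlet heat kernel satisfies $M_{s}:=\sup_{x\in D}p^{D}(s,x,x)<\infty$ for every $s>0$; this is a consequence of parabolic regularity, and it may also be read off from the usual Gaussian upper bound $p^{D}(s,x,y)\leq c\,s^{-d/2}\exp(-|x-y|^{2}/(Cs))$. Calibrating $t=\varepsilon/4$ so that $2t=\varepsilon/2$, and taking supremum in $x$, yields
\begin{equation*}
\|\varphi_{k}^{D}\|_{\infty}^{2}\leq M_{\varepsilon/2}\,e^{\lambda_{k}^{D}\varepsilon/2}.
\end{equation*}

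Finally, I would conclude via the spectral-trace identity. Inserting the previous bound into the desired series,
\begin{equation*}
\sum_{k=0}^{\infty}e^{-\lambda_{k}^{D}\varepsilon}\|\varphi_{k}^{D}\|_{\infty}^{2}\leq M_{\varepsilon/2}\sum_{k=0}^{\infty}e^{-\lambda_{k}^{D}\varepsilon/2}=M_{\varepsilon/2}\int_{D}p^{D}(\varepsilon/2,x,x)\,dx\leq |D|\,M_{\varepsilon/2}^{2}<\infty,
\end{equation*}
where the middle equality is obtained by integrating the spectral expansion $p^{D}(\varepsilon/2,x,x)=\sum_{k}e^{-\lambda_{k}^{D}\varepsilon/2}\varphi_{k}^{D}(x)^{2}$ over $D$ and using $\|\varphi_{k}^{D}\|_{2}=1$.

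There is essentially no genuine obstacle: the only point to get right is the calibration $t=\varepsilon/4$, which is forced by the factor of two lost in Cauchy--Schwarz and the factor of two gained from Chapman--Kolmogorov. The only analytic input beyond the spectral decomposition is the on-diagonal boundedness $\sup_{x}p^{D}(s,x,x)<\infty$ for $s>0$, a standard consequence of the uniform ellipticity and the boundary regularity built into the setup; no effective constants are needed for this qualitative statement.
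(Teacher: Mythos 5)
Your proof is correct. It starts from the same point as the paper's — the semigroup identity $\varphi_k^D = e^{\lambda_k^D t}P_t^D\varphi_k^D$ — and derives a pointwise bound of the form $\|\varphi_k^D\|_\infty^2 \leq C(t)\,e^{2\lambda_k^D t}$, but the two arguments diverge in two respects. First, you apply Cauchy--Schwarz directly to $\int_D p^D(t,x,y)\varphi_k^D(y)\,dy$ and use the Chapman--Kolmogorov identity $\int_D p^D(t,x,y)^2\,dy = p^D(2t,x,x)$, which only needs the on-diagonal bound $\sup_x p^D(s,x,x)<\infty$; the paper instead uses Jensen's inequality on the sub-probability measure $\mathbb{E}_x[\cdot,\tau_D>t]$ together with the Aronson off-diagonal bound $\sup_{x,y}p^D(t,x,y)\lesssim t^{-d/2}$. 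These are essentially equivalent in strength. Second, and more substantively, you conclude summability via the spectral trace identity $\sum_k e^{-\lambda_k^D s}=\int_D p^D(s,x,x)\,dx \leq |D|\,M_s$, whereas the paper invokes Weyl's eigenvalue asymptotics $\lambda_k^D \gtrsim k^{2/d}$. Your route is more self-contained: it reuses the same heat-kernel input and bypasses Weyl's theorem entirely, at the modest cost of needing the termwise integration $\int_D p^D(s,x,x)\,dx=\sum_k e^{-\lambda_k^D s}$, which is standard here since the Dirichlet heat semigroup is trace class and $p^D$ is continuous on $\overline{D}\times\overline{D}$ for $s>0$. Both proofs establish the lemma with the same calibration $t=\varepsilon/4$.
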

\begin{proof}
By Jensen's inequality we have 
\begin{displaymath}
 (\varphi_k^D(x))^2= e^{2\lambda_kt}\bigl(\mathbb{E}_x\bigl[\varphi^{D}_k(B_t),\tau_D>t\bigr]\bigr)^2 \leq e^{2\lambda_kt}\mathbb{E}_x\bigl[\varphi_k^{D}(B_t)^2,\tau_D>t\bigr].
\end{displaymath}
Since $\sup_{x,y \in D}p^{D}(t,x,y) \leq t^{-d/2}$ (see e.g. \cite{Ar}), we conclude that 
\begin{equation}\label{subexponentional}
 \|\varphi_k^{D}\|_{\infty}^2 \leq e^{2\lambda_kt} \,t^{-d/2}\,\|\varphi_k^{D}\|_{L^2(D)}^2 = e^{2\lambda_kt} \,t^{-d/2}.
\end{equation}
Choosing $t=\frac{\epsilon}{4}$ in \eqref{subexponentional}, the finiteness of the sum in (\ref{sumfinite}) follows from Weyl's theorem, which states that for appropriate constants $c_1,c_2>0$ one has $c_1 k^{2/d} \leq \lambda^D_k \leq c_2 k^{2/d}$ (see e.g. \cite{D}, Theorem 6.3.1).
\end{proof}
A simple application of Lemma \ref{uniformconvergence} and the dominated convergence theorem gives
\begin{Corollary}\label{contofdens}
Let $D \subset \mathbb{R}^d$ be a bounded domain with smooth boundary and $L$ a symmetric diffusion operator with Dirichlet boundary conditions. Let $T_D = \inf\lbrace t>0 \mid B_t \in \partial D\rbrace$ the first hitting time of the boundary of $D$ the $L$-diffusion process $(B_t)_{t \geq 0}$ starting in $x \in D$ then the distribution of $T_D$ is absolutely continuous with a density which is jointly continuous in $(x,t) \in D\times (0, \infty)$.
\end{Corollary}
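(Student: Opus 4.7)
The plan is to obtain the density through the spectral decomposition of the killed transition density and to differentiate the survival function term by term, using Lemma \ref{uniformconvergence} together with Weyl's asymptotics to justify the interchange of derivative and sum on compact subsets of $D\times(0,\infty)$.

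First I would write the survival function as $G(x,t):=\mathbb{P}_x(T_D>t)=\int_D p^D(t,x,y)\,dy$ and expand $p^D$ via the eigenfunction basis $(\varphi_k^D)_{k\geq 0}$ of the symmetric operator $L$ with Dirichlet conditions, giving
\begin{equation*}
G(x,t)=\sum_{k=0}^\infty e^{-\lambda_k^D t}\,\varphi_k^D(x)\,c_k, \qquad c_k:=\int_D \varphi_k^D(y)\,dy.
\end{equation*}
Since $\|\varphi_k^D\|_{L^2}=1$ and $D$ is bounded, $|c_k|\leq |D|^{1/2}$. The candidate density is then the formally differentiated series
\begin{equation*}
f(x,t):=\sum_{k=0}^\infty \lambda_k^D\, e^{-\lambda_k^D t}\,\varphi_k^D(x)\,c_k.
\end{equation*}

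Next I would show that this series converges absolutely and uniformly on $D\times[\varepsilon,\infty)$ for every $\varepsilon>0$. The bound $|\varphi_k^D(x)c_k|\le |D|^{1/2}\|\varphi_k^D\|_\infty$ reduces the task to controlling $\sum_k \lambda_k^D e^{-\lambda_k^D t}\|\varphi_k^D\|_\infty$. Split $e^{-\lambda_k^D t}=e^{-\lambda_k^D t/2}\cdot e^{-\lambda_k^D t/2}$: the first factor absorbs the $\lambda_k^D$-prefactor since $\sup_k \lambda_k^D e^{-\lambda_k^D t/2}<\infty$ for $t\geq\varepsilon$. For the remaining sum $\sum_k e^{-\lambda_k^D t/2}\|\varphi_k^D\|_\infty$ I would apply Cauchy--Schwarz,
\begin{equation*}
\sum_k e^{-\lambda_k^D t/2}\|\varphi_k^D\|_\infty \leq \Bigl(\sum_k e^{-\lambda_k^D t/4}\Bigr)^{1/2}\Bigl(\sum_k e^{-3\lambda_k^D t/4}\|\varphi_k^D\|_\infty^2\Bigr)^{1/2},
\end{equation*}
where the second factor is finite by Lemma \ref{uniformconvergence} and the first is finite by Weyl's bound $\lambda_k^D\ge c_1 k^{2/d}$.

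Given this uniform convergence, each partial sum is jointly continuous in $(x,t)$ (the eigenfunctions $\varphi_k^D$ are continuous on $D$ by elliptic regularity), so the limit $f(x,t)$ is jointly continuous on $D\times(0,\infty)$. The same uniform bound justifies differentiating the series for $G$ term by term, yielding $\partial_t G(x,t)=-f(x,t)$ on $D\times(0,\infty)$. Since $G(x,0)=1$ and $G(x,t)\to 0$ as $t\to\infty$, integrating gives $\mathbb{P}_x(T_D\in[0,t])=\int_0^t f(x,s)\,ds$, proving absolute continuity with the stated continuous density.

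The main obstacle is the convergence step: the extra factor $\lambda_k^D$ introduced by differentiating in $t$ combined with the (possible) growth of $\|\varphi_k^D\|_\infty$ at first sight threatens divergence, and the resolution relies on trading exponential decay against polynomial $\lambda_k^D$-growth from Weyl's law together with the summability estimate of Lemma \ref{uniformconvergence}.
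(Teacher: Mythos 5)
Your proof is correct and fills in precisely the details the paper leaves implicit; the paper merely states that the corollary follows from Lemma \ref{uniformconvergence} and dominated convergence, and your spectral expansion of $G(x,t)$ with term-by-term differentiation justified via Lemma \ref{uniformconvergence} and Weyl's bound is exactly the intended route. A small shortcut: since $\|\varphi_k^D\|_{L^2}=1$ forces $\|\varphi_k^D\|_\infty\ge|D|^{-1/2}$, one has $\|\varphi_k^D\|_\infty\le|D|^{1/2}\|\varphi_k^D\|_\infty^2$, so $\sum_k e^{-\lambda_k^D t/2}\|\varphi_k^D\|_\infty$ is finite directly from Lemma \ref{uniformconvergence} without the Cauchy--Schwarz step.
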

Let $L_0 = \frac{1}{2}\sum_{i,j=0}^{d}\partial_i\bigl(a_{ij}\partial_j\bigr)$ be the generator of a uniformly elliptic reversible diffusion in $\mathbb{R}^d$ with smooth and bounded coefficients written in divergence form or 
\begin{displaymath}
L_0 = \frac{1}{2}\sum_{i,j=1}^{d}a_{ij}\partial_{ij} + \bigl(\frac{1}{2}\sum_{i=0}^d\partial_i a_{ij}\bigr)_{j=0}^d\cdot \nabla =: \frac{1}{2}\sum_{i,j=1}^{d}a_{ij}\partial_{ij} + \sum_{i=0}^d\beta_i\cdot \nabla 
\end{displaymath}
in non-divergence form. We have seen that the first hitting time distribution of the set $\partial D$ has a density which is continuous with respect to the starting point of the $L_0$-diffusion. We will now use this result in order to extend this to diffusions of the form 
\begin{displaymath}
L := L_0 + \gamma \cdot \nabla,
\end{displaymath}
with $\gamma \in (C_c(\mathbb{R}^d))^{d}$. 
\begin{lemma}\label{contofdens2}
Let $\mathbb{P}_x$ denote the law of a $L$-diffusion and let as above $T_D=\inf\lbrace t > 0 \mid B_{t} \in \partial D\rbrace$ denote the first hitting time of $\partial D$. Then $\mathbb{P}_x(T_D \in \cdot\bigr)$ is absolutely continuous with a density $h(\cdot,\cdot)$ that is jointly continuous in
$(x,t)\in D \times (0,\infty) $. 
\end{lemma}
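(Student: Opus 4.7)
The plan is to reduce the statement to Corollary \ref{contofdens} (which handles the symmetric $L_{0}$) via a Girsanov change of measure. Write $a=\sigma\sigma^{\top}$, so that $\sigma^{-1}$ is bounded by uniform ellipticity; let $(W_{t})$ denote the Brownian motion driving the $L_{0}$-SDE for $B$, and let $\mathbb{P}_{x}^{0}$ denote the law of the $L_{0}$-diffusion starting from $x$. Since $\gamma$ is bounded (being continuous with compact support), the exponential process
\begin{displaymath}
M_{t}:=\exp\Bigl(\int_{0}^{t}\bigl\langle\sigma^{-1}(B_{s})\gamma(B_{s}),dW_{s}\bigr\rangle-\tfrac{1}{2}\int_{0}^{t}\bigl|\sigma^{-1}(B_{s})\gamma(B_{s})\bigr|^{2}\,ds\Bigr)
\end{displaymath}
satisfies Novikov's condition and is therefore a true $\mathbb{P}_{x}^{0}$-martingale with finite $L^{p}$-moments for every $p$; moreover $d\mathbb{P}_{x}\big|_{\mathcal{F}_{T}}=M_{T}\,d\mathbb{P}_{x}^{0}\big|_{\mathcal{F}_{T}}$ for every $T>0$.

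Fix $T>0$ and a bounded measurable $\phi$ supported in $(0,T)$. Then $\phi(T_{D})\mathbf{1}_{T_{D}\leq T}$ is $\mathcal{F}_{T_{D}\wedge T}$-measurable, and the optional sampling theorem applied to $M$ at the bounded stopping time $T_{D}\wedge T$ yields
\begin{displaymath}
\mathbb{E}_{x}\bigl[\phi(T_{D})\mathbf{1}_{T_{D}\leq T}\bigr]=\mathbb{E}_{x}^{0}\bigl[M_{T_{D}}\phi(T_{D})\mathbf{1}_{T_{D}\leq T}\bigr].
\end{displaymath}
Conditioning on $T_{D}$ under $\mathbb{P}_{x}^{0}$ and invoking Corollary \ref{contofdens} (which supplies the continuous density $h_{0}(x,t)$ of $T_{D}$ under $\mathbb{P}_{x}^{0}$), the right-hand side equals
\begin{displaymath}
\int_{0}^{T}\phi(t)\,F(x,t)\,h_{0}(x,t)\,dt,\qquad F(x,t):=\mathbb{E}_{x}^{0}\bigl[M_{T_{D}}\,\big|\,T_{D}=t\bigr].
\end{displaymath}
Since $T$ and $\phi$ were arbitrary, $T_{D}$ under $\mathbb{P}_{x}$ is absolutely continuous with density $h(x,t)=F(x,t)h_{0}(x,t)$ on $(0,\infty)$.

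The main obstacle is the joint continuity of $F(x,t)$, the factor $h_{0}(x,t)$ already being continuous. I would address this by splitting the $L_{0}$-path at a small deterministic time $\varepsilon>0$: on $\{T_{D}>\varepsilon\}$ the Markov property of $\mathbb{P}_{x}^{0}$ allows one to write $F(x,t)h_{0}(x,t)$ for $t>\varepsilon$ as an integral over $y\in D$ of the jointly continuous transition density $p^{D,L_{0}}(\varepsilon,x,y)$ against the corresponding object for the $L_{0}$-diffusion restarted at $y$ on the interval of length $t-\varepsilon$, with the Girsanov functional restricted to $[\varepsilon,t]$ giving a uniformly $L^{2}$-bounded weight. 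Dominated convergence then delivers joint continuity on $(0,\infty)\times D$. A considerably shorter analytic alternative, closer in spirit to the proof of Corollary \ref{contofdens}, is to observe that $u(t,x):=\mathbb{P}_{x}(T_{D}>t)=\int_{D}p^{D,L}(t,x,y)\,dy$ solves the parabolic Dirichlet problem $\partial_{t}u=Lu$, $u|_{\partial D}=0$, $u(0,\cdot)=\mathbf{1}_{D}$; classical interior parabolic regularity for uniformly elliptic operators with smooth principal part and bounded continuous drift then gives $u\in C^{1}_{t}$, whence $h(x,t)=-\partial_{t}u(t,x)$ is jointly continuous.
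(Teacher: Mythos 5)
Your Girsanov step is done a bit more crisply than the paper's: by stopping the density martingale $M$ at the bounded stopping time $T_{D}\wedge T$ and disintegrating over the value of $T_{D}$ under $\mathbb{P}_{x}^{0}$, you extract directly the multiplicative form $h(x,t)=F(x,t)\,h_{0}(x,t)$, whereas the paper only records $\mathbb{P}_{x}(T_{D}>t)=\mathbb{E}_{x}^{0}[N_{t};T_{D}>t]$ and deduces absolute continuity from $\mathbb{P}_{x}\ll\mathbb{P}_{x}^{0}$. So the existence of a density is obtained by essentially the same mechanism. Your \emph{first} continuity route (split the path at a deterministic $\varepsilon$, use the Markov property, then dominated convergence) is also, after unwinding the restarted Girsanov factor, the same as the paper's argument: the paper establishes, via Lebesgue differentiation, a version of $h$ satisfying $h(x,t)=\int_{D}p^{D}(t-\varepsilon,x,y)\,h(y,\varepsilon)\,dy$, where $p^{D}$ is the killed $L$-transition density; combined with boundedness of $h(\cdot,\varepsilon)$ (from the $L^{1}\to L^{\infty}$ smoothing of $P^{D}_{\varepsilon}$) and joint continuity of $p^{D}$, dominated convergence gives the claim. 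Your formulation is more circuitous because it insists on carrying $F$ and $h_{0}$ separately and working under $\mathbb{P}^{0}_{x}$, but it collapses to the same identity for $h$ against $p^{D,L}$.

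Your \emph{second} route, however, has a genuine gap as stated. The lemma only assumes $\gamma\in (C_{c}(\mathbb{R}^{d}))^{d}$, so the total drift of $L$ is merely bounded and continuous, not H\"older continuous. Classical interior Schauder estimates, which would give $u\in C^{1+\alpha/2}_{t}C^{2+\alpha}_{x}$ and hence continuity of $u_{t}=-h$, require H\"older coefficients; under continuous-but-not-H\"older drift, De~Giorgi--Nash--Moser theory yields only H\"older continuity of $u$ itself, not of $u_{t}$. To save the analytic route you would either need to strengthen the hypothesis on $\gamma$ beyond what the lemma asks, or pass back to the probabilistic Duhamel/Markov identity $h(x,t)=\int_{D}p^{D}(t-\varepsilon,x,y)h(y,\varepsilon)\,dy$ --- which is precisely your first route and the paper's proof. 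As written, the second alternative should be dropped or explicitly restricted to the case of H\"older drift.
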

\begin{proof}
Let us denote by $\mathbb{P}_x$ the path space measures associated to the diffusion generated by $L$ and let $\mathbb{P}^{0}_x$ denote the path space measures corresponding to the $L_0$-diffusion. Then by the Girsanov theorem (see e.g. section 1.9 in \cite{P95}) one has
\begin{displaymath}
\frac{d\mathbb{P}_x}{d\mathbb{P}_x^{0}}\restriction_{\mathcal{F}_t} = N_t,
\end{displaymath}
where 
\begin{displaymath}
N_t := \exp\biggl( \int_0^ta^{-1} \gamma (X_s)\,d\bar{X}_s - \frac{1}{2}\int_0^t\langle a^{-1}\gamma (X_s),\gamma(X_s) \rangle_{\mathbb{R}^d}\,ds\biggr),
\end{displaymath}
where $\bar{X}_t = X_t -\int_0^{t}\beta(X_s)\,ds$.
Therefore we have 
\begin{displaymath}
\mathbb{P}_x\bigl(T_{D} > t\bigr)= \mathbb{E}^{0}_x\bigl[N_t;T_{D}>t\bigr],
\end{displaymath}
since we already know that the exit time distribution $\mathbb{P}^0_x(T_D \in \,\cdot)$ of the reversible diffusion corresponding to $L_0$ is absolute continuous with respect to the Lebesgue measure the same holds true for $\mathbb{P}_x\bigl(T_D \in \,\cdot\bigr)$. Thus there is a function $h(x,t)$ ($x \in D, t>0$), which is jointly measurable such that for $x \in D$ and measurable $A \subset (0,\infty)$  
\begin{displaymath}
\mathbb{P}_x(T_D \in A)=\int_A h(x,s)\,ds.
\end{displaymath}
It remains to show that $h(x,t)$ has a continuous modification in $(x,t)\in D \times (0,\infty)$. As a first step let us show that $h(x,t)$ is a.s. uniformly bounded in $x$ for all $t\ge c>0$,
i.e. $h(x,t)\le M_{t}\le M_{c}<\infty\,\,\forall t\in[c,\infty]$. We will use Lebesgue's version of the fundamental theorem of calculus to conclude that for almost every $(x,t) \in D \times (\varepsilon,\infty)$ 
\begin{equation}\label{e:hmitlebesgue}
\lim_{r,u \rightarrow  0}\frac{1}{2u}\frac{1}{|B(x,r)|}\int_{B(x,r)}\mathbb{P}_x\bigl( T_D  \in (t-u, t+u)\bigr)\,dx = h(x,t)
\end{equation}
as well as
\begin{equation}\label{e:mappingprop}
\begin{split}
\frac{1}{2u}\frac{1}{|B(x,r)|}&\int_{B(x,r)}\mathbb{P}_x\bigl( T_D  \in (t-u, t+u)\bigr)\,dx \\
&= \frac{1}{2u}\frac{1}{|B(x,r)|}\int_{B(x,r)}\int_D p^D(t-\varepsilon,x,y)\mathbb{P}_y\bigl( T_D \in (\varepsilon-u,\varepsilon+u)\bigr)\,dy\,dx \\
&= \frac{1}{2u}\frac{1}{|B(x,r)|}\int_{B(x,r)}\int_D p^D(t-\varepsilon,x,y)\mathbb{P}_y\bigl( T_D \in (\varepsilon-u,\varepsilon+u)\bigr)\,dy\,dx\\
&= \frac{1}{2u}\frac{1}{|B(x,r)|}\int_{\varepsilon-u}^{\varepsilon+u}\int_{B(x,r)}\int_D p^D(t-\varepsilon,x,y)h(y,s)\,dy\,dx\,ds\\
&\,\rightarrow \int_D p^D(t-\varepsilon,x,y)h(y,\varepsilon)\,dy
\end{split}
\end{equation}
as $r,u\rightarrow 0+$. Using equations \eqref{e:hmitlebesgue} and \eqref{e:mappingprop} together with the fact that for very positive $t >0$ the operaror $P^D_t$ maps an integrable function to a bounded one we deduce that for some constant $C>0$ 
\begin{eqnarray}\label{integrand}
|h(x,t)-h(z,s)|&=&\biggl|\int_{D}(p^D(t-u,x,y)-p^D(s-u,z,y))h(y,u)dy \biggr|\nonumber\\
&\le& C  \int_{D}|p^D(t-u,x,y)-p^D(s-u,z,y)|\,dy.
\end{eqnarray}
Due to continuity of $p^D(\cdot,\cdot,y)$, we see that for $(z,s)\rightarrow (x,t) \in D \times (0,\infty)$ the integrand in \eqref{integrand} converges pointwise to zero,
and hence we obtain by Lebesgue's theorem that
\[
\lim_{(z,s)\rightarrow(x,t)}|h(x,t)-h(z,s)|=0,
\] 
which yields the claim.

%\begin{equation}\label{estim}
%\begin{split}
%\mathbb{P}_x(T_D > t)-\mathbb{P}_x(T_D>t+s) &= \mathbb{E}_x^{0}\bigl[N_t;T_{D}>t\bigr]-\mathbb{E}_x^{0}\bigl[N_{t+s},T_D>t+s\bigr]\\
%&=\mathbb{E}_x^{0}\bigl[N_{t+s};T_{D}>t\bigr]-\mathbb{E}_x^{0}\bigl[N_{t+s},T_D>t+s\bigr]\\
%&= \mathbb{E}_x^{0}\bigl[N_{t+s};T_D \in (t,t+s]\bigr]\\
%&= \mathbb{E}_x^{0}\bigl[N_{t/2}\,\mathbb{E}^0_{X_{t/2}}\bigl[N_{t/2+s},T_D \in (t/2,t/2+s]\bigr],T_{D}>t/2\bigr] \\
%&= \mathbb{E}_x^{0}\biggl[N_{t/2}\,\int_{t/2}^{t/2+s}h(X_{t/2},r)\,dr,T_{D}>t/2\biggr] = (e^{t/2 L}H_{t,s}) (x),
%\end{split}
%\end{equation}
%where $H_{t,s}(x):=\int_{t/2}^{t/2+s}h(X_{t/2},r)\,dr$. Using that the integral kernel $e^{\frac{t}{2} L}(x,y)$ of $e^{\frac{t}{2}L}$ is bounded uniformly in $x,y \in D$ by some constant $C_{t/2}$ we see that for every $s >0$ and $x \in D$
%\begin{displaymath}
%\int_{t}^{t+s}h(x,r)\,dr \leq C_{t/2} \int_D\int_{t/2}^{t/2+s}h(y,r)\,dr\,dy
%\end{displaymath}
%and thus
%\begin{displaymath}
%h(x,t)\leq C_{t/2} \int_D h(y,t/2)\,dy.
%\end{displaymath}
%But then we see using \eqref{estim} and dominated convergence that
%\begin{displaymath}
%h(x,t) = \lim_{s \rightarrow 0}\frac{\mathbb{P}_x(T_D > t)-\mathbb{P}_x(T_D>t+s)}{s} = \mathbb{E}_x^{0}\bigl[N_{t/2}h(X_{t/2},t/2),T_{D}>t/2\bigr].
%\end{displaymath}
%Since the function $h(\cdot,t/2)$ is bounded in $D$ we may conclude by the strong Feller property (see 7.1 in \cite{SVII} for this in much greater generality) of $e^{t/2L}$ that the right hand side of the last equation is continuous in $x \in \bar{D}$ 
\end{proof}
\begin{lemma}\label{l:unifconvheatk}
Let $(\nu_n)_{n \in \mathbb{N}}$ a sequence of probability measures in $D$ converging weakly to the probability measure $\nu$. Then for every $\varepsilon > 0$  
\begin{displaymath}
\lim_{n\rightarrow \infty}\sup_{s>\varepsilon, y \in D}\bigl|p^D(s,\nu_n,y)-p^D(s,\nu,y)\bigr| = 0
\end{displaymath}
\end{lemma}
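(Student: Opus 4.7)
The plan is to apply the Chapman–Kolmogorov identity to split off a short time step, which converts the weak convergence of $\nu_n$ into $L^1$-convergence of a bounded integrable function and lets the remaining piece of the heat kernel absorb the supremum over $(s,y)$ by itself.

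Concretely, for $s > \varepsilon$ and $y \in D$ I would write
\begin{equation*}
p^D(s,\nu_n,y) = \int_D f_n(z)\,p^D\bigl(s-\tfrac{\varepsilon}{2},z,y\bigr)\,dz,\qquad f_n(z) := \int_D p^D\bigl(\tfrac{\varepsilon}{2},x,z\bigr)\,d\nu_n(x),
\end{equation*}
and analogously define $f(z)$ using $\nu$. For each fixed $z \in D$ the map $x \mapsto p^D(\varepsilon/2,x,z)$ is bounded and continuous on $D$ (by standard parabolic regularity for the Dirichlet heat kernel, combined with the Aronson bound for uniform boundedness), so the weak convergence $\nu_n \to \nu$ yields the pointwise convergence $f_n(z) \to f(z)$ for every $z \in D$.

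Next I would verify the uniform bound
\begin{equation*}
C_\varepsilon := \sup\bigl\{p^D(s,z,y) : s \ge \tfrac{\varepsilon}{2},\ z,y \in D\bigr\} < \infty.
\end{equation*}
For $s = \varepsilon/2$ the Aronson upper bound for uniformly elliptic diffusions with bounded coefficients yields $\sup_{z,y} p^D(\varepsilon/2,z,y) \le C(\varepsilon/2)^{-d/2}$. For $s > \varepsilon/2$ one applies Chapman–Kolmogorov, $p^D(s,z,y) = \int p^D(s-\varepsilon/2,z,w)\,p^D(\varepsilon/2,w,y)\,dw$, together with the sub-Markov property $\int p^D(s-\varepsilon/2,z,w)\,dw \le 1$, to obtain the very same bound. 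Since $D$ has finite Lebesgue measure and $|f_n|,|f| \le C_\varepsilon$, dominated convergence gives $\|f_n - f\|_{L^1(D)} \to 0$.

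Finally, uniformly in $s > \varepsilon$ and $y \in D$,
\begin{equation*}
\bigl|p^D(s,\nu_n,y) - p^D(s,\nu,y)\bigr| \le \int_D |f_n(z) - f(z)|\,p^D\bigl(s-\tfrac{\varepsilon}{2},z,y\bigr)\,dz \le C_\varepsilon\,\|f_n - f\|_{L^1(D)} \longrightarrow 0,
\end{equation*}
which proves the lemma. The main technical point is establishing the uniform bound $C_\varepsilon < \infty$; once the short-time Aronson estimate is combined with the sub-Markov contraction property of $P^D_t$, the desired weak–$L^1$ interchange is routine.
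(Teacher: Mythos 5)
Your proposal is correct and follows essentially the same route as the paper: both split off a short time step of length $\varepsilon/2$ via Chapman--Kolmogorov, reduce the claim to $L^1(D)$-convergence of $p^D(\varepsilon/2,\nu_n,\cdot)\to p^D(\varepsilon/2,\nu,\cdot)$, and then close the estimate using the boundedness of the heat kernel for times bounded away from zero. The only distinction is in the $L^1$ step: the paper combines pointwise convergence of $p^D(\varepsilon/2,\nu_n,z)$ with convergence of the total masses and appeals to Scheff\'e's lemma, whereas you combine pointwise convergence with the uniform Aronson bound and invoke dominated convergence on the bounded domain $D$; both are equally routine here.
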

\begin{proof}
First observe that due to $\lim_{n\rightarrow \infty}\nu_n = \nu$ weakly we have for every $s>0$ and $z \in D$ 
\begin{displaymath}
\lim_{n\rightarrow \infty}p^D(s,\nu_n,z) = p^D(s,\nu,z)
\end{displaymath}
as well as 
\begin{displaymath}
\lim_{n \rightarrow \infty} \int_Dp^D(s,\nu_n,z)\,dz = \int_Dp^D(s,\nu,z)\,dz.
\end{displaymath}
Thus as a consequence of Scheff\'e's lemma we conclude that
\begin{equation}\label{e:scheffea}
\lim_{n \rightarrow \infty}\int_D\bigl| p^D(s,\nu_n,z)-p^D(s,\nu,z)\bigr|\,dz=0. 
\end{equation}
Observe now that due to the Chapman-Kolmogorov equations
\begin{equation*}
\begin{split}
\big| p^D(\varepsilon + s,\nu_n,z)&-p^D(\varepsilon+s,\nu,z)\bigr|= \biggl|\int_D \bigl(p^D(\varepsilon/2 ,\nu_n,y)-p^D(\varepsilon/2,\nu,y)\bigr)p^D(\varepsilon/2+s,y,z)\,dy\biggr| \\
&\leq \bigl[\sup_{s\geq 0;y,z\in D}p^D(\varepsilon + s,y,z)\bigr]\int_D\bigl| p^D(\varepsilon/2 ,\nu_n,y)-p^D(\varepsilon/2,\nu,y)\bigr|dy.
\end{split}
\end{equation*}
Since according to equation \eqref{e:scheffea} the right hand side converges to $0$ the assertion of the Lemma is shown. 
\end{proof}
\begin{lemma}\label{BaP2lemma}
Assume that $\nu_{i}\stackrel{w}{\rightarrow} \nu$. Then there exists for each $t\ge0$ an $\alpha=\alpha(t) < 1$ such that for all $\tilde{\nu} \in \lbrace  \nu,\nu_1,\nu_2,\dots\rbrace$ and $x \in D$ 
\begin{displaymath}
 \mathbb{P}_x\bigl(T^{\nu}_{i+1}\leq t) \leq \alpha^i\,\,\,\forall i\in\mathbb{N}.
\end{displaymath}
\end{lemma}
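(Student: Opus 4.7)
My plan is to decompose $T^{\tilde\nu}_{i+1}$ as the sum of independent exit times and then reduce the problem to a single uniform bound on the probability that the first hitting time of the boundary is small. Writing $T^{\tilde\nu}_{i+1}=T^{\tilde\nu}_1+\sum_{j=2}^{i+1}S^{\tilde\nu}_j$ with all summands non-negative and independent, the event $\{T^{\tilde\nu}_{i+1}\le t\}$ forces each summand to be at most $t$; independence together with the trivial bound $\mathbb{P}_x(T^{\tilde\nu}_1\le t)\le 1$ then gives
\[
\mathbb{P}_x\bigl(T^{\tilde\nu}_{i+1}\le t\bigr)\;\le\;\bigl[\mathbb{P}_{\tilde\nu}(T_D\le t)\bigr]^{i},
\]
where $T_D$ is the first hitting time of $\partial D$ by the $L$-diffusion. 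So it suffices to produce $\alpha=\alpha(t)<1$ with $\mathbb{P}_{\tilde\nu}(T_D\le t)\le\alpha$ for every $\tilde\nu\in\{\nu,\nu_1,\nu_2,\dots\}$.

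For this reduction I introduce $g(x):=\mathbb{P}_x(T_D>t)=\int_D p^D(t,x,y)\,dy$. By the joint continuity of $p^D(t,\cdot,\cdot)$ and dominated convergence (equivalently, by Lemma \ref{contofdens2} applied to $1-\int_0^t h(x,s)\,ds$), $g$ is continuous and bounded on $D$, strictly positive there, and extends continuously to $\bar D$ with value $0$ on $\partial D$. Because the jump distributions $\nu$ and $\nu_n$ are probability measures supported in $D$, they put no mass on $\partial D$, so the weak convergence $\nu_n\stackrel{w}{\to}\nu$ yields $\int g\,d\nu_n\to \int g\,d\nu>0$. Hence there exist $c>0$ and $N\in\mathbb{N}$ with $\int g\,d\nu_n\ge c$ for all $n\ge N$; for the finitely many indices $n<N$ one has $\int g\,d\nu_n>0$ already since $g>0$ on $D$, so replacing $c$ by the minimum of these finitely many quantities together with the tail bound gives a uniform $c>0$ over the whole (countable) family. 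Setting $\alpha:=1-c$ finishes the argument.

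The only delicate step is the uniform positivity of $\int g\,d\nu_n$. The potential obstruction is that $g$ vanishes on $\partial D$, so a priori weak convergence could shift mass arbitrarily close to the boundary and drive the integral to zero; this is ruled out precisely because $g$ extends continuously to $\bar D$, turning weak convergence on $D$ into ordinary convergence of the integrals against $g$. Once this uniform positivity is in hand, the remainder of the lemma is just independence together with a trivial geometric-series estimate, so this positivity step is the main (and only mildly delicate) obstacle in carrying out the plan.
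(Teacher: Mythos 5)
Your proof is correct and essentially the same as the paper's: the paper likewise reduces to the one-step bound $c_{\tilde\nu}(t) = 1 - \int_D\int_D p^D(t,x,y)\,dy\,\tilde\nu(dx)$ via the Markov property, uses weak convergence to obtain $c_{\nu_n}\to c_\nu$, and then takes $\alpha = \max\{c_{\nu_1},\dots,c_{\nu_{n(\epsilon)}},c_\nu+\epsilon\}$, which is exactly your ``uniform $c>0$ over the whole family plus finitely many exceptions handled by hand.'' The only cosmetic difference is that you name the function $g(x)=\mathbb{P}_x(T_D>t)$ and discuss its boundary behavior explicitly, while the paper simply invokes $c_{\nu_n}\to c_\nu$; both rest on $g\in C_b(D)$ and the fact that the jump measures put no mass on $\partial D$.
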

\begin{proof}
For  $\tilde{\nu} \in \lbrace  \nu,\nu_1,\nu_2,\dots\rbrace$ define
\[c_{\tilde{\nu}}=c_{\tilde{\nu}}(t)=1-\int_D\int_D p^D(t,x,y)\,dy\, \tilde{\nu}(dx).\] 
As a direct consequence of the Markov property we get for $\tilde{\nu} \in \lbrace \nu,\nu_1,\nu_2,\dots\rbrace$
\begin{equation}\label{finalconclusion}
\begin{split}
 \mathbb{P}_x\bigl(T^{\tilde{\nu}}_{i+1}\leq t) &\leq \bigl(\mathbb{P}(\exists \, 0 \leq s\leq t: W^{\tilde{\nu},1}_s \in \partial D)\bigr)^i \\
 &=c_{\tilde{\nu}}^{i}.
\end{split}
\end{equation}
>From $\lim_{n\rightarrow \infty}\nu_n = \nu$ it follows that for all $t>0$ we have
\begin{displaymath}
c_{\nu_n}\rightarrow c_{\nu} \mbox{ for }n\rightarrow\infty,
\end{displaymath}
which implies that for all $\epsilon>0$ there exists $n(\epsilon)$ with the property that for $n \geq n(\epsilon)$ 
\begin{equation}\label{less1}
c_{\nu_n}\le c_{\nu}+\epsilon.
\end{equation}
For $\epsilon$ small enough (e.g. $\epsilon\le \frac{1-c_{\nu}}{2}$) the right hand side in (\ref{less1}) is strictly smaller than one.
Since 
\begin{displaymath}
c_{\nu_n} < 1\mbox{ for all }n\in\{1,2,\ldots,n(\epsilon)\}.
\end{displaymath}
Let
\[
\alpha=\max\{c_{\nu_1},c_{\nu_2},\ldots,c_{\nu_{n(\epsilon)}},c_{\nu}+\epsilon\}.
\]
Obviously, $\alpha=\alpha(t)<1$ and since $c_{\tilde{\nu}}\le\alpha$, the result follows from (\ref{finalconclusion}).
\end{proof}
Before we state and prove the main result of this section, let us introduce important notations, which are necessary for illustrating the structure of the proof.  
Let
\begin{equation}
h^{\rho}(t)=\frac{d}{dt}\mathbb{P}_{\rho}\bigl(T_{1}^{\nu}<t\bigr)
\end{equation}
and for $n \ge 2$
\begin{equation}
h^{\rho,\nu}_n(t)=\frac{d}{dt}\mathbb{P}\bigl(T_{n}^{\rho,\nu}<t\bigr)=\bigl( h^{\rho}\ast(h^{\nu})^{\ast,n-1}\bigr)(t).
\end{equation}
In the case when $\rho=\delta_{x}$ is the Dirac measure in $x$, we will write $h^{x,\nu}_n(t)$ instead of $h^{\delta_x,\nu}_n(t)$. Observe that $h^{\rho}(t)$ is nothing else than the density of the first exit time of the $L$-diffusion with the initial distribution $\rho$ from the domain $D$.
Moreover, set for $n \geq 1$
\begin{displaymath}
 \mathcal{S}^{\nu}_{0,t}g(z):= \int_D p^{D}(t,x,z)g(x)\,dx\,\text{   and   }\, \mathcal{S}^{\nu}_{n,t}g(z):=\int_D \int_0^{t}p^D(t-s,y,z)g(y)h^{x,\nu}_n(s)\,ds\,\nu(dy)dx
\end{displaymath}
and similarly
\begin{displaymath}
 \mathcal{P}^{\nu}_{0,t}f(x):= \int_D p^{D}(t,x,y)f(y)\,dy\,\text{   and   }\, \mathcal{P}^{\nu}_{n,t}f(x):=\int_D \int_0^{t}p^D(t-s,y,z)f(z)h^{x,\nu}_n(s)\,ds\,\nu(dy)dz.
\end{displaymath}
With these notations let us recall the definition of $P_t^{\nu}$ and write it in another way. For all $f\in L^{\infty}$ we have
\begin{equation}\label{GK1}
\begin{split}
\mathcal{P}_t^{\nu}f(x)&=  \mathbb{E}_x\bigl[f(X_t^{\nu})\bigr] = \mathbb{E}_x\bigl[f(X_t^{\nu}),T^{\nu}_1>t\bigr] + \sum_{n=1}^{\infty}\mathbb{E}_x\bigl[f(X_t^{\nu}),\,T^{\nu}_n \leq t < T^{\nu}_{n+1}\bigr] \\
  &= \mathbb{E}_x\bigl[f(X^{\nu}_t), T_1^{\nu}> t\bigr] + \sum_{n=1}^{\infty}\int_{0}^{t}\mathbb{E}_{x} \bigl[f(X_t^{\nu}),T^{\nu}_{n+1}>t|T^{\nu}_{n}=s\bigr]\mathbb{P}_x\bigl(T^{\nu}_{n}\in ds\bigr) \\
  &=\mathbb{E}_x\bigl[f(X^{\nu}_t),T^1_{\nu} > t\bigr] + \sum_{n=1}^{\infty}\int_{0}^{t}\mathbb{E}_{\nu}\bigl[f(X_{t-s}^{\nu});S^{\nu}_{n+1}>t-s\bigr]h_n^{x,\nu}(s)ds\\
 &=\int_D p^{D}(t,x,y)f(y)\,dy  + \sum_{n=1}^{\infty}\int_{0}^{t}\int_D \int_D p^D(t-s,y,z)f(z)\nu(dy)dz \,h^{x,\nu}_n(s)\,ds\,  \\
 &=\mathcal{P}^{\nu}_{0,t}f(x)+\sum_{i=1}^{\infty}\mathcal{P}^{\nu}_{i,t}f(x).
 \end{split}
\end{equation}
and hence
\begin{equation}\label{PartititionP}
\mathcal{P}_t^{\nu}=\sum_{i=0}^{\infty}\mathcal{P}^{\nu}_{i,t}.
\end{equation}
A dual version of this calculation immediately yields
\begin{equation}\label{PartititionS}
\mathcal{S}^{\nu}_{t}=\sum_{i=0}^{\infty}\mathcal{S}^{\nu}_{i,t}.
\end{equation}
Moreover, it is obvious that
\begin{equation}
\langle \mathcal{S}^{\nu}_{i,t}g,f\rangle=\langle g,\mathcal{P}_{i,t}^{\nu}f\rangle\,\,\forall g\in L^{1},\,f\in L^{\infty}, \,\,i\in \mathbb{N}\cup\{0\},
\end{equation}
and hence 
\[\mathcal{S}^{\nu\ast}_{i,t}=\mathcal{P}_{i,t}^{\nu}\,\,\forall i\in\mathbb{N}\cup\{0\}.\] 
\\
\\
We are now able to prove our first main result. During the proof we use $\|\,\cdot\,\|_{p,p}$ to denote the operator norm of a bounded linear operator mapping $L^p$ to $L^p$.
\begin{theorem}\label{contofgap}
The spectral gap $\gamma_1(\nu)$ of the generator of the semigroup $(\mathcal{S}^{\nu}_t)_{t \geq 0}$ acting in $L^1(D)$ depends continuously on the jump distribution $\nu$ with respect to the weak topology. 
\end{theorem}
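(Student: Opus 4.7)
The plan is to reduce the continuity of $\gamma_1(\nu)$ to operator-norm continuity of the dual semigroup $\nu \mapsto \mathcal{S}_t^\nu$ on $L^1(D)$ at a fixed time $t > 0$, and then invoke Corollary \ref{CKato} to transport this into Hausdorff continuity of the spectra and, in particular, of the modulus of the second-largest spectral value. Throughout I would think of $\mathcal{S}_t^\nu$ as acting on $L^1$ with a kernel in the variables $(z,x)$, and use that on $L^1$ the operator norm of a positive kernel operator equals the supremum over $x$ of the $L^1$-norm of the kernel in $z$.

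First I would fix $t > 0$ and a weakly convergent sequence $\nu_n \stackrel{w}{\rightarrow} \nu$, and use the decomposition \eqref{PartititionS} to write
\begin{equation*}
\mathcal{S}_t^{\nu_n} - \mathcal{S}_t^{\nu} \;=\; \sum_{i=1}^{\infty}\bigl(\mathcal{S}_{i,t}^{\nu_n} - \mathcal{S}_{i,t}^{\nu}\bigr),
\end{equation*}
observing that $\mathcal{S}_{0,t}$ is the Dirichlet heat semigroup and independent of $\nu$. Lemma \ref{BaP2lemma} provides $\alpha=\alpha(t)<1$ uniform over $\tilde{\nu}\in\{\nu,\nu_1,\nu_2,\ldots\}$ controlling $\mathbb{P}_x(T_{i+1}^{\tilde\nu}\le t)\le\alpha^i$; since each $\|\mathcal{S}_{i,t}^{\tilde\nu}\|_{1,1}$ is bounded by $\sup_x \mathbb{P}_x(T_i^{\tilde\nu}\le t)$ (this is just the probability that there have been at least $i$ jumps before time $t$), the tail $\sum_{i\ge N}(\mathcal{S}_{i,t}^{\nu_n}-\mathcal{S}_{i,t}^\nu)$ has operator norm $\le 2\sum_{i\ge N}\alpha^{i-1}$, which becomes arbitrarily small, uniformly in $n$, by choosing $N$ large.

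The core estimate is therefore the term-by-term convergence $\|\mathcal{S}_{i,t}^{\nu_n}-\mathcal{S}_{i,t}^\nu\|_{1,1}\to 0$ for each fixed $i\ge 1$. Here the $\nu$-dependence of $\mathcal{S}_{i,t}^\nu$ enters both through $\nu(dy)$ in front of the heat kernel factor and through the convolution densities $h_i^{x,\nu}=h^x\ast (h^\nu)^{\ast(i-1)}$. Lemma \ref{l:unifconvheatk} hands us uniform convergence of $p^D(s,\nu_n,\cdot)$ to $p^D(s,\nu,\cdot)$ on $[\varepsilon,\infty)\times D$ for any $\varepsilon>0$; Corollary \ref{contofdens} and Lemma \ref{contofdens2} provide joint continuity of $h^x(s)$ in $(x,s)\in D\times(0,\infty)$, so $h^{\nu_n}=\int h^x\,\nu_n(dx)\to h^\nu$ uniformly on compact subsets of $(0,\infty)$. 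By the standard stability of convolution under uniform convergence (with a harmless integrable envelope coming from the exponential tails of the exit-time distribution), $h_i^{x,\nu_n}(s)\to h_i^{x,\nu}(s)$ uniformly in $(x,s)$ on $D\times[0,t]$. Splitting the time integral in $\mathcal{S}_{i,t}^\nu$ at $\varepsilon$ (the piece on $[0,\varepsilon]$ is $O(\varepsilon)$ uniformly by integrability of the kernel) and applying Lemma \ref{l:unifconvheatk} on $[\varepsilon,t]$ gives the desired $L^1$-operator norm convergence after taking $\sup_x\int_D|\cdot|\,dz$.

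Once $\mathcal{S}_t^{\nu_n}\to\mathcal{S}_t^\nu$ in operator norm, Corollary \ref{CKato} applies: for every sufficiently small circle $\Gamma$ around the isolated eigenvalue $1$ of $\mathcal{S}_t^\nu$ (coming from the invariant density $g^D(\nu,\cdot)/m^\nu$), the spectra of $\mathcal{S}_t^{\nu_n}$ are eventually separated by $\Gamma$ in the same way, and the finite part inside $\Gamma$ converges to $\{1\}$ in the Hausdorff sense with preserved algebraic multiplicities. Combined with uniform boundedness of the full spectra by the operator norm (all eigenvalues of modulus $>e^{-t\gamma_1(\nu)}+\delta$ are eventually matched by the perturbed ones and conversely), one obtains $\sup\{|\lambda|:\lambda\in\Sigma(\mathcal{S}_t^{\nu_n})\setminus\{1\}\}\to\sup\{|\lambda|:\lambda\in\Sigma(\mathcal{S}_t^\nu)\setminus\{1\}\}$. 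Via the spectral mapping theorem (available thanks to compactness of $\mathcal{S}_t^\nu$ and strong continuity), this sup equals $e^{-t\gamma_1(\nu)}$, so $\gamma_1(\nu_n)\to\gamma_1(\nu)$, proving the theorem. I expect the main obstacle to be the careful handling of the operator-norm convergence of the series—in particular establishing the term-by-term convergence of the convolution densities $h_i^{x,\nu_n}$ uniformly in $x$, and dealing with the singularity of $p^D$ as $s\downarrow 0$ in the convolution defining $\mathcal{S}_{i,t}^\nu$; everything else is a relatively clean application of Kato's perturbation theorems.
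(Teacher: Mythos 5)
Your proposal follows essentially the same route as the paper's own proof: the decomposition $\mathcal{S}_t^{\nu}=\sum_i\mathcal{S}_{i,t}^{\nu}$, the uniform geometric tail control from Lemma~\ref{BaP2lemma}, the term-by-term estimate split into a heat-kernel part handled by Lemma~\ref{l:unifconvheatk} and an exit-density part handled by the continuity established in Lemma~\ref{contofdens2}, and the final passage to the spectral gap via Corollary~\ref{CKato} and the spectral mapping theorem. The only cosmetic difference is that the paper telescopes $h_{i}^{x,\nu}-h_{i}^{x,\nu_n}$ and uses Scheff\'e's lemma to get $L^1$-convergence of the exit densities, whereas you appeal to uniform convergence of $h^{\nu_n}$ on compacta and stability of convolution, a minor variant of the same estimate.
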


Several ideas of the proof are taken from \cite{BaP2}, where they are used for a different purpose.

\begin{proof}
We will use Kato's results cited in the beginning of this section. In order to apply these results we have to show that for admissable distributions $\nu,\nu_1,\nu_2,\dots$ 
\begin{displaymath}
 \lim_{n\rightarrow \infty}\nu_n = \nu\, \text{ with respect to the weak topology }\,\Rightarrow\,\lim_{n\rightarrow \infty}\|\mathcal{S}^{\nu_n}_t - \mathcal{S}^{\nu}_t\|_{1,1}=0.\,\,\,\mbox{ for all }t\in\mathbb{R}_{+}
\end{displaymath}
%:=\int_D \int_0^{t}p^D(t-s,y,z)f(z)h^{x,\nu}_i(s)\,ds\,\nu(dy)dz
%&= \mathbb{E}_x\bigl[f(B_t)\bigr] + \int_0^t \mathbb{E}_{\nu}\bigl[f(B_{t-s})\bigr]U_{x}(ds)\\
%&= \mathbb{E}_x\bigl[f(B_t)\bigr] + \int_0^t \mathbb{E}_{\nu}\bigl[f(B_{t-s})\bigr]u_{x}(ds)\, ds,
%\begin{equation*}
% h_n^{x,\nu}(t) = (h^x * (h^{\nu,\nu}_{n-1})(t),\qquad\,\,h^x(t) = \frac{\mathbb{P}_{x}\bigl(S^{\nu}_{1}\in du\bigr)}{du}, 
%\end{equation*}
%\[U_{x}(t)=\sum_{n=1}^{\infty}\mathbb{P}_{x}\bigl(T^{\nu}_{n}\in [0,t]\bigr), u_{x}(t)=\frac{dU_{x}(t)}{dt}.\]
%We call $u(t)$ the density of the renewal measure $U(t)$ of the delayed renewal process $X(t)$ started at $x$.\\

We have 
\begin{displaymath}
 \|\mathcal{S}^{\nu}_t-\mathcal{S}^{\nu_n}_t\|_{1,1}\leq \underbrace{\bigl\|\mathcal{S}^{\nu}_t - \sum_{i=0}^N \mathcal{S}^{\nu}_{i,t} \bigr\|_{1,1}}_{I(N,\nu)}+\underbrace{\bigl\|\sum_{i=0}^N(\mathcal{S}^{\nu}_{i,t}-\mathcal{S}^{\nu_n}_{i,t})\bigr\|_{1,1}}_{II(N,\nu,\nu_n)} + \underbrace{\bigl\|\mathcal{S}^{\nu_n}_t - \sum_{i=0}^N \mathcal{S}^{\nu_n}_{i,t} \bigr\|_{1,1}}_{III(N,\nu_n)}. 
\end{displaymath}
By duality and Lemma \ref{BaP2lemma} we obtain
\begin{eqnarray}\label{almosttrue}
I(N,\nu)&=&\bigl\|\sum_{i=N+1}^{\infty}\mathcal{S}^{\nu}_{i,t}\bigr\|_{1,1}= \bigl\|\sum_{i=N+1}^{\infty}\mathcal{P}^{\nu}_{i,t}\bigr\|_{\infty,\infty}\nonumber\\
&=&\sup_{f\in L^{\infty}:||f||_{\infty}=1}\mathbb{E}_{x}\bigl(f(X_t^{\nu});T_{N+1}^{\nu}\le t\bigr)\nonumber\\
&\le&\sup_{x}\mathbb{P}_{x}\bigl(T_{N+1}^{\nu}\le t\bigr)\le \alpha(t)^{N}.
\end{eqnarray}
Observe that by Lemma \ref{BaP2lemma} inequality (\ref{almosttrue}) holds for all $\tilde{\nu}\in\{\nu,\nu_1,\nu_2,\ldots,\nu_n,\ldots\}$ with $\nu_{n}\rightarrow\nu$ (in the weak topology). Hence, for all $\epsilon>0$ we can find $n_0=n_{0}(\epsilon)$ such that for $N\ge n_0$ we have
\begin{equation}
I(N,\nu)\le\epsilon,\,\,\,III(N,\nu_n)\le\epsilon.
\end{equation}    
Since $II(N,\nu,\nu_n)\le N\max_{i\in\{1,2,\ldots,n_0\}}\{\bigl\|\mathcal{S}^{\nu}_{i,t}-\mathcal{S}^{\nu_n}_{i,t}\bigr\|_{1,1}\}$,
it remains to show that for all $\epsilon>0$ there exists $n_1$ such that
\begin{displaymath}
\max_{i\in\{1,2,\ldots,n_0\}}\{\|\mathcal{S}^{\nu}_{i,t}-\mathcal{S}^{\nu_n}_{i,t}\|_{1,1}\}\le \frac{\epsilon}{N}\,\,\,\forall n\ge n_1.
\end{displaymath}
Since we maximize over finitely many elements, it is easy to see that it suffices to show that every $\epsilon>0$ and every $i_0\in\{1,2,\ldots\}$
there exists $n_1$ such that
\begin{equation}\label{none}
\|\mathcal{S}^{\nu}_{i_0,t}-\mathcal{S}^{\nu_n}_{i_0,t}\|_{1,1}\le \frac{\epsilon}{N}\,\,\,\forall n\ge n_1.
\end{equation}
%In order to do so we observe that for $f \in L^{\infty}(D)$
We have by duality
\begin{eqnarray}
&&\|\mathcal{S}^{\nu}_{i_0,t}-\mathcal{S}^{\nu_n}_{i_0,t}\|_{1,1}=\|\mathcal{P}^{\nu}_{i_0,t}-\mathcal{P}^{\nu_n}_{i_0,t}\|_{\infty,\infty}\nonumber\\
&=&\sup_{x \in D}\sup_{f\in L^{\infty}:||f||_{\infty}=1}\biggl|\int_0^t\int_Dp^D(t-s,\nu,z)f(z)\,dyh^{x,\nu}_{i_0}(s)\,ds-\int_0^t\int_Dp^D(t-s,\nu_n,y)f(y)\,dz h_{i_0}^{x,\nu_n}(s)\,ds\biggr|\nonumber \\
&\le& \sup_{x \in D}\sup_{f\in L^{\infty}:||f||_{\infty}=1}\biggl|\int_0^t\int_Dp^D(t-s,\nu,z)f(y)\,dz h_{i_0}^{x,\nu}(s)\,ds-\int_0^t\int_Dp^D(t-s,\nu_n,z)f(z)\,dz h_{i_0}^{x,\nu}(s)\,ds\biggr| \nonumber \\
&&+\sup_{x \in D}\sup_{f\in L^{\infty}:||f||_{\infty}=1}\biggl|\int_0^t\int_D p^D(t-s,\nu_n,z)h_{i_0}^{x,\nu}\,ds f(z)\,dz- \int_D\int_0^t p^D(t-s,\nu_n,z)h_{i_0}^{x,\nu_n}(s)\,ds f(z)\,dz\biggr|\nonumber\\
&=&\sup_{x \in D}\sup_{f\in L^{\infty}:||f||_{\infty}=1} \biggl|\int_0^t\left(\int_D (p^D(t-s,\nu,z)-p^D(t-s,\nu_n,z))f(z)\,dz\right) h_{i_0}^{x,\nu}(s)\,ds\biggr|\nonumber \\
&&+
\sup_{x \in D}\sup_{f\in L^{\infty}:||f||_{\infty}=1}\biggl|\int_0^{t}\int_Dp^{D}(t-s,\nu_n,z)f(z)\,dz\bigl(h^{x,\nu}_{i_0}(s)-h^{x,\nu_n}_{i_{0}}(s)\bigr)\,ds\biggr| 
\nonumber
\\
&=&\tilde{I}_n(f) + \tilde{II}_n(f)\nonumber.
\end{eqnarray}
The first term can be estimated by
\begin{equation*}
 \begin{split}
  \tilde{I}_n(f) &\le \sup_{x \in D} \biggl|\int_0^t \underbrace{\left(\int_D |p^D(t-s,\nu,z)-p^D(t-s,\nu_n,z)|\,dz \right)}_{ a(n,t-s)} h_{i_0}^{x,\nu}(s)\,ds\biggr| \\
    &=  \sup_{x\in D}\biggl|\int_0^t a(n,t-s)h_{i_0}^{x,\nu}(s)\,ds\biggr|\\
    &\leq  \sup_{x\in D}\biggl|\int_0^{t-\varepsilon} a(n,t-s)h_{i_0}^{x,\nu}(s)\,ds\bigr| + \sup_{x\in D}\biggl|\int_{t-\varepsilon}^t a(n,t-s)h_{i_0}^{x,\nu}(s)\,ds\biggr|,
 \end{split}
\end{equation*}
where $\varepsilon \in (0,t)$ is arbitrary. Using $a(n,t-s) \leq 2$ for every $n \in \mathbb{N}$ and $\int_0^t h_{i_0}^{x,\nu}(s)\,ds \leq 1$ for every $x \in D$ we conclude that
\begin{equation}\label{firstterm}
\begin{split}
 \tilde{I}_n(f) &\leq \sup_{s \leq t-\varepsilon}a(n,t-s) + 2\sup_{x \in D}\int_{t-\varepsilon}^th_{i_0}^{x,\nu}(s)\,ds.
\end{split}
\end{equation}
Due to Lemma \ref{l:unifconvheatk} the first term in \eqref{firstterm} converges to $0$ for every $\varepsilon > 0$. Whereas the second term in \eqref{firstterm} can be made arbitrary small by chosing $\varepsilon$ small enough (see Lemma 3 in \cite{BaP2}). Thus
\begin{displaymath}
\limsup_{n \rightarrow \infty}\tilde{I}_n(f) = 0.
\end{displaymath} 

In order to estimate $\tilde{II}_n(f)$, let
\begin{equation}
 \tilde{h}^{x}_{j,i_0-j-1}=h^x * \underbrace{(h^{\nu} * \dots * h^{\nu})}_{j\mbox{ terms}} * \underbrace{(h^{\nu_n} * \dots *h^{\nu_n})}_{i_0-j-1\mbox{ terms}}.
\end{equation}
>From commutativity of the convolution it follows
\begin{eqnarray}
 \tilde{h}^{x}_{j,i_0-j-1}-\tilde{h}^x_{j-1,i_0-j}(s) &=& h^x * [\underbrace{h^{\nu} * \dots * h^{\nu}}_{j-1\mbox{ terms}} * (h^{\nu}-h^{\nu_n}) * \underbrace{h^{\nu_n} * \dots *h^{\nu_n}}_{i_0-j-1\mbox{ terms}}](s)\nonumber\\
 &=&(h^{\nu}-h^{\nu_n}) *h^x * [h^{\nu} * \dots * h^{\nu} * h^{\nu_n} * \dots *h^{\nu_n}](s)\nonumber\\
 &=&(h^{\nu}-h^{\nu_n})*\tilde{h}^{x}_{j-1,i_0-j-1}.
\end{eqnarray}

Therefore we have
\begin{eqnarray}\label{secondterm}
 \tilde{II}_n(f) &\le & \|f\|_{\infty}\sup_{x \in D} \int_0^t|h_{i_0}^{x,\nu}(s)-h_{i_0}^{x,\nu_n}(s)|\,ds\nonumber\\
 &\le&\sup_{x \in D}\sum_{j=0}^{i_0-2}\int_{0}^{t}|\tilde{h}^{x}_{j+1,i_0-j-2}-\tilde{h}^x_{j,i_0-j-1}|(s)\,ds \nonumber\\
 &=&\sup_{x \in D}\sum_{j=0}^{i_0-2}\int_{0}^{t}|(h^{\nu}-h^{\nu_n})*\tilde{h}^{x}_{j,i_0-j-2}|(s)\,ds \nonumber\\
 &\le& (i_0-1)\sup_{x \in D}\max_{j\in\{0,1,\ldots,i_0-2\}}\int_{0}^{t}|(h^{\nu}-h^{\nu_n})|*\tilde{h}^{x}_{j,i_0-j-2}(s)\,ds \nonumber\\
 &\stackrel{Fubini}{\le}&(i_0-1)\int_{0}^{t}|(h^{\nu}-h^{\nu_n})|(s)ds \times \sup_{x \in D}\int_{0}^{t}\tilde{h}^{x}_{j_0,i_0-j_0-2}(s)\,ds \nonumber\\
 &\le&(i_0-1)\int_{0}^{t}|(h^{\nu}-h^{\nu_n})|(s)ds\nonumber\\
&&\rightarrow 0\,\,\,\,\,\mbox{ for }n\rightarrow\infty,
\end{eqnarray}
where $j_0$ denotes the index where the maximum is attained. For the last step we use that according to Lemma \ref{contofdens2} the density 
\begin{displaymath}
h(x,s) = \frac{\mathbb{P}_{x}\bigl(T_D \in ds\bigr)}{ds} 
\end{displaymath}
of the first exit time $T_D$ from the domain $D$ is continuous in $x$, and hence by weak convergence the integrand of the inner integral converges pointwise to zero. Another application of Scheff\'e's lemma shows that the last integral in \eqref{secondterm} converges to zero. From (\ref{firstterm}) and (\ref{secondterm}) it follows that there exists $n_1$ such that 
(\ref{none}) holds.
\end{proof}

\begin{remark}
\text{   }
\begin{enumerate}
\item
In \cite{BaP1} Ben-Ari and Pinsky formulated the question, whether the spectral gap $\gamma_1(\nu)$ depends continuously on $\nu$. Theorem \ref{contofgap} answers this question affirmatively. But observe that Theorem \ref{contofgap} actually shows that arbitrary finite subsets counted according to multiplicities of the spectrum depend continuously on $\nu$. 
\item
In the recent preprint \cite{KM} we also give answers to Question 1 and Question 2, which are posed on page 130 \cite{BaP1}. 
\end{enumerate}
\end{remark}
The continuity of the spectral gap might be of some importance e.g. in optimization problems concerning the speed of convergence for the multi-dimensional BMJB. 
\begin{Corollary}
Let $D \subset \mathbb{R}^d$ satisfy our standard assumptions and let $\gamma_1(\nu)$ denote the spectral gap of the BMJB in $D$ with jump distribution $\nu$. Then for every precompact (with respect to the weak topology) subset $\mathcal{K}$ of probability measures on $D$ satisfying $\sup_{n \in \mathcal{K}}\gamma_1(\nu)<\infty$ there exists a jump-distribution $\nu_0$ such that 
\begin{displaymath}
\gamma_1(\nu_0) = \sup_{\nu \in \mathcal{K}}\gamma_1(\nu).
\end{displaymath}
\end{Corollary}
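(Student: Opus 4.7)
The plan is to exploit the continuity of $\nu \mapsto \gamma_1(\nu)$ established in Theorem~\ref{contofgap}, together with the sequential compactness of $\mathcal{K}$ in the weak topology, via a standard maximizing--sequence argument. First I would pick a sequence $(\nu_n)_{n\in\mathbb{N}}\subset\mathcal{K}$ with
\[
\gamma_1(\nu_n)\longrightarrow s := \sup_{\nu\in\mathcal{K}}\gamma_1(\nu),
\]
which is possible precisely because $s<\infty$ by hypothesis.

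Since $\mathcal{K}$ is precompact with respect to the weak topology and the weak topology on probability measures on the (separable, metrizable) domain $D$ is metrizable, I can extract a subsequence $(\nu_{n_k})_{k\in\mathbb{N}}$ converging weakly to some probability measure $\nu_0$ in the weak closure $\overline{\mathcal{K}}$. Applying Theorem~\ref{contofgap} to this convergent subsequence yields $\gamma_1(\nu_{n_k})\to\gamma_1(\nu_0)$, and combining this with $\gamma_1(\nu_{n_k})\to s$ gives $\gamma_1(\nu_0)=s$, which is the desired conclusion.

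The one step that requires care, and which I expect to be the main obstacle, is verifying that the weak limit $\nu_0$ is genuinely an admissible jump distribution in the sense of the paper, so that Theorem~\ref{contofgap} and the very definition of $\gamma_1(\nu_0)$ apply. Since $D$ is open, the weak closure of a family of probability measures on $D$ may in principle contain measures that charge the boundary $\partial D$, in which case the associated BMJB is not covered by the setup. Under the standing hypotheses (bounded $D$ with $C^{2,\alpha}$-boundary and $\mathcal{K}$ precompact within the space of probability measures on $D$ rather than on $\overline{D}$) this is not an issue; in the presence of possible boundary mass one would either impose a tightness condition keeping $\mathcal{K}$ uniformly away from $\partial D$ or extend the continuity of Theorem~\ref{contofgap} to such boundary limits. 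The assumption $\sup_{\nu\in\mathcal{K}}\gamma_1(\nu)<\infty$ then becomes automatic, since a continuous function on a compact set is bounded, but including it in the statement removes any need to revisit this admissibility question.
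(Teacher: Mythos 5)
Your proof is correct and follows essentially the same route as the paper's: a maximizing sequence, extraction of a weakly convergent subsequence by precompactness, and an application of the continuity result (Theorem~\ref{contofgap}). The extra care you take about whether the weak limit $\nu_0$ is an admissible jump distribution (i.e.\ does not charge $\partial D$) is a reasonable point that the paper's one-line proof passes over silently.
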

\begin{proof}
Take a sequence $(\nu_n)_n \subset \mathcal{K}$ such that $\lim_{n\rightarrow \infty}\gamma_1(\nu_n) = \sup_{\nu \mathcal{K}}\gamma_1(\nu) < \infty$. Then by precompactness of $\mathcal{K}$ we can extract a weakly convergent subsequence $(\nu_{n_k})_{k\in\mathbb{N}}$ with limit $\nu_0$ and due to the continuity of the spectral gap 
\begin{displaymath}
\gamma_1(\nu_0)= \lim_{k \rightarrow \infty}\gamma_1(\nu_{n_k}) = \sup_{\nu \in \mathcal{K}}\gamma_1(\nu).
\end{displaymath}
\end{proof}
For the remaining part of this work we note the following simple 
\begin{Corollary}\label{c:redcompsupp}
Let $D =(a,b)$ and let $\nu$ be an admissible jump distribution, i.e. $\nu(\lbrace a, b\rbrace)=0$ and set $\nu_n(\,\cdot\,) = \nu\bigl(\,\cdot\,\mid \lbrace x : \text{dist}(x,\partial D)> 1/n\rbrace\bigr)$. Then we have 
\begin{displaymath}
 \lim_{n\rightarrow \infty} \gamma_1(\nu_n) = \gamma_1(\nu).
\end{displaymath}
\end{Corollary}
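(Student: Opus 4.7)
The plan is to reduce the statement to a direct application of Theorem \ref{contofgap}. Since the theorem asserts that $\gamma_1$ is continuous on the space of admissible jump distributions endowed with the weak topology, it suffices to verify that the truncated-and-renormalized measures $\nu_n$ converge weakly to $\nu$.

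First I would check that $\nu_n$ is well-defined for all sufficiently large $n$. Writing $A_n = \{x \in D : \operatorname{dist}(x,\partial D) > 1/n\}$, one has $A_n \uparrow (a,b) \setminus \{a,b\}$ and, by the admissibility assumption $\nu(\{a,b\}) = 0$, monotone convergence gives $\nu(A_n) \to 1$. In particular $\nu(A_n) > 0$ for $n$ large, so $\nu_n(\,\cdot\,) = \nu(\,\cdot\, \cap A_n)/\nu(A_n)$ is a bona fide probability measure supported in $A_n \subset\subset (a,b)$, hence admissible.

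Next, for every bounded continuous $f : [a,b] \to \mathbb{R}$, I would write
\begin{equation*}
\int_D f\, d\nu_n = \frac{1}{\nu(A_n)} \int_D f\, \mathbf{1}_{A_n}\, d\nu,
\end{equation*}
and observe that $f\mathbf{1}_{A_n} \to f$ pointwise on $D \setminus \{a,b\}$, which is a set of full $\nu$-measure. Since $|f\mathbf{1}_{A_n}| \le \|f\|_\infty$ and $\nu$ is finite, dominated convergence yields $\int_D f\, \mathbf{1}_{A_n}\, d\nu \to \int_D f\, d\nu$, and combined with $\nu(A_n) \to 1$ this gives $\int f\,d\nu_n \to \int f\,d\nu$. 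Thus $\nu_n \xrightarrow{w} \nu$.

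Finally, Theorem \ref{contofgap} applied to the sequence $(\nu_n)_n$ and its weak limit $\nu$ delivers $\gamma_1(\nu_n) \to \gamma_1(\nu)$, which is exactly the claim. There is essentially no obstacle here beyond verifying well-definedness and performing the elementary weak-convergence computation above; the nontrivial content has already been absorbed into Theorem \ref{contofgap}.
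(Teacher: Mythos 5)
Your proposal is correct and takes exactly the route the paper intends: the paper states the corollary without proof, treating it as an immediate consequence of Theorem \ref{contofgap}, and the only content you need to supply is the (elementary) verification that $\nu_n \to \nu$ weakly, which your dominated-convergence argument handles cleanly. Nothing is missing.
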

Thus once it is shown, that $\gamma_1(\cdot)$ is constant on compactly supported jump distributions (or even on jump distributions supported on finite sets), we can conclude that $\gamma_1(\cdot)$ is constant on the set of all admissible jump distributions.

\section{Rate of Convergence: Probabilistic Approach}
%Observe that due to Theorem \ref{contofgap} $\gamma_1(\nu)$ is constant in $\nu$ once one knows that $\gamma_1(\cdot)$ is constant for a dense subset of the set of probability in $(a,b)$. In particular it is enough to prove the constancy of $\gamma_1(\cdot)$ on the set of probability measures with compact support in $(a,b)$ or even on the set of finite convex combinations of point distributions. 

In this section we consider BMJB on the interval $(a,b)$ and recover a recent result of \cite{LLR}, which was previously shown via elegant Fourier-analytic arguments. As already mentioned in \cite{LLR} these Fourier-analytic arguments do not offer any probabilistic explanation.
We use the coupling approach, which provides a probabilistically more satisfactory explanation for the obtained convergence rates. As is well-known the coupling method usually involves the construction of two suitably dependent processes as a crucial step. In our situation the main difficulty is present due to the fact that in contrast to one-dimensional diffusions without jumps two independent BMJB can 'pass' each other without hitting each other. The lower bound on the convergence rate is derived -- as usual in coupling approaches -- via an investigation of the tail behavior of the coupling time.
The following result constitutes the second main theorem of this work.  
\begin{theorem}\label{main}
Assume that $d=1$. Then we have
\begin{enumerate}
\item
$\gamma_{1}(\nu)=\lambda_1^D$
\item
There exists an efficient coupling.
\end{enumerate}
\end{theorem}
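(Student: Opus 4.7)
The plan is to deduce part (1) from Proposition \ref{bound} together with part (2): the inequality $\gamma_1(\nu)\le\lambda_1^{(a,b)}$ is already established, so the efficient coupling is what is responsible for producing the matching lower bound. By Corollary \ref{c:redcompsupp} it suffices to treat jump distributions with $\mathrm{supp}(\nu)\subset[\alpha,\beta]\subset(a,b)$. Throughout I use the reflection $R(x)=a+b-x$ about the midpoint $m=(a+b)/2$.

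The heart of the construction is a mirror coupling for the symmetric pair of starting points $(x,R(x))$. Drive both processes by a single standard Brownian motion $(B_t)_{t\ge 0}$ started at $x$ and set $X^{x}_t=B_t$, $X^{R(x)}_t=R(B_t)$ until the first boundary hit $T_{\partial D}=\inf\{t:B_t\in\{a,b\}\}$. The two processes are exact mirror images of each other, remain inside $(a,b)$ at the same instants, and exit simultaneously at opposite endpoints; at $T_{\partial D}$ one couples their jumps by drawing a single sample $\xi\sim\nu$ and placing both processes at $\xi$. If instead $B$ reaches the midpoint $m$ before leaving $(a,b)$, the two processes already coincide at $m$. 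After this first coalescence run them as a single process, using common samples for every subsequent jump. The coupling time $\tau$ is thus dominated by
\[
\tau_0:=\inf\{t\ge 0:B_t\in\{a,m\}\}\qquad(x\in(a,m);\ \text{symmetrically for }x\in(m,b)).
\]

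The decisive tail estimate is the observation that $\tau_0$ is the first exit of Brownian motion from the half-interval $(a,m)$ of length $(b-a)/2$, whose principal Dirichlet eigenvalue for $-\tfrac{1}{2}\tfrac{d^2}{dx^2}$ is $\pi^2/(2(m-a)^2)=2\pi^2/(b-a)^2=\lambda_1^{(a,b)}$. Hence $\mathbb{P}_x(\tau_0>t)\le Ce^{-\lambda_1^{(a,b)}t}$, with $C$ chosen uniformly in starting positions lying in $[\alpha,\beta]$ (those are the starting points after the first jump cycle, all bounded away from $\partial D$). This immediately yields
\[
\|\mathbb{P}_x(X^\nu_t\in\cdot)-\mathbb{P}_{R(x)}(X^\nu_t\in\cdot)\|_{TV}\le Ce^{-\lambda_1^{(a,b)}t}.
\]

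The main obstacle will be extending this to the bound $\sup_x\|\mathbb{P}_x(X^\nu_t\in\cdot)-\mu^{\nu}\|_{TV}\le Ce^{-\lambda_1^{(a,b)}t}$ required for the lower bound on the spectral gap, since $\mu^{\nu}$ is not $R$-symmetric unless $\nu$ is. My plan for arbitrary initial pairs $(x,y)$ is a two-phase coupling: Phase~1 brings the two processes into a symmetric configuration $(z,R(z))$ via a Doeblin-type synchronization of the jumps, exploiting that both re-sample from the common law $\nu$ with compact support in $(a,b)$, so that a positive coupling probability per jump cycle is available; Phase~2 is the symmetric mirror coupling above. The delicate point is that inter-jump intervals driving Phase~1 have tails only of order $e^{-\lambda_0^{(a,b)}t}$, so Phase~1 and Phase~2 must be interleaved rather than carried out sequentially, ensuring that the dominant contribution to the coupling-time tail is the mirror-coupling rate $\lambda_1^{(a,b)}$. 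Combined with Proposition \ref{bound}, this yields $\gamma_1(\nu)=\lambda_1^{(a,b)}$.
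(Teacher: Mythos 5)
Your proposal correctly identifies three of the paper's ingredients: the reduction via Corollary~\ref{c:redcompsupp} to jump distributions with compact support in $(a,b)$, the mirror coupling for antisymmetric pairs $(x,R(x))$ (which is exactly Proposition~\ref{keyprop}: run one process forward and the other backward on a common Brownian motion, and if they exit simultaneously at opposite endpoints, use a common draw from $\nu$), and the fact that the coupling time for such a pair is an exit time of Brownian motion from an interval of length $\tfrac{b-a}{2}$, whose tail decays at rate $\lambda_0^{(a,\frac{a+b}{2})}=\lambda_1^{(a,b)}$. You also correctly diagnose where the real work lies: passing from the symmetric starting pair to an arbitrary pair $(x,y)$.

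However, what you propose for that last step is not a proof, and you in effect acknowledge this. Your ``Phase~1 Doeblin-type synchronization of the jumps'' is problematic on its own terms: the two copies hit $\partial D$ at different random times, so you cannot simply use a common $\nu$-draw to merge them, and the minorization per ``jump cycle'' would at best give a coupling-time tail governed by $\lambda_0^{(a,b)}$ rather than $\lambda_1^{(a,b)}$. You then state that Phase~1 and Phase~2 ``must be interleaved'' so that the $\lambda_1$-rate dominates, but you give no mechanism for this interleaving. That is precisely the gap: the construction that achieves it is the substantive content of the theorem.

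The paper's construction is quite different and much more concrete. Starting from an arbitrary pair reduced by symmetry to $0<y-x<\mathrm{dist}(\mathrm{supp}\,\nu,\{a,b\})$ with $\tfrac{x+y}{2}\ge\tfrac{a+b}{2}$, it proceeds in three deterministic stages driven by a single Brownian motion: (a) run both copies \emph{parallel} until either they become symmetric about the midpoint (then finish by the mirror coupling) or the upper copy hits $b$ and jumps to $J_1\sim\nu$; (b) run them \emph{reflected} until either they meet or their gap grows to $b-J_1$; (c) run them \emph{parallel} again until either they become symmetric (mirror coupling) or the upper copy hits $b$, at which point it is forced to jump to the \emph{same} $J_1$ used in stage (a), producing instant coalescence. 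Each stage terminates at the exit time of Brownian motion from an interval of length $\le\tfrac{b-a}{2}$, so the total coupling time is stochastically dominated by a sum of finitely many (five) i.i.d.\ such exit times, giving the tail $O(e^{-\lambda t})$ for every $\lambda<\lambda_1^{(a,b)}$. None of this appears in your proposal; you would need to supply this (or some other) explicit multi-stage construction to close the gap.

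Finally, note that part~(1) then follows from the coupling bound together with Proposition~\ref{bound}, as you say, and the constancy of $\gamma_1$ on compactly supported $\nu$ is transferred to general admissible $\nu$ by Corollary~\ref{c:redcompsupp}; those parts of your plan are sound.
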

The proof is split into smaller pieces.
\subsection{Auxiliary results} 
We will need the following elementary auxiliary results:
\begin{proposition}\label{firstprop}
Let $I=(a,b)$ be an open interval with center $c=\frac{a+b}{2}$. For $y\in I$ let $\tau_{y}=\inf\{t:y+B_t\in\partial I\}$ the first time
of leaving the interval. Then we have for all $y\in I$ and $t\in\mathbb{R}_+$
\[
\mathbb{P}\bigl(\tau_{y}>t\bigr)\le \mathbb{P}\bigl(\tau_{c}>t\bigr).
\] 
\end{proposition}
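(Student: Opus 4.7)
My plan is to establish the stronger claim that $\tau_c$ stochastically dominates $\tau_y$. By the reflection invariance $R(x) := a+b-x$ of the interval $I=(a,b)$ one has $\tau_y \stackrel{d}{=} \tau_{R(y)}$, so I may assume without loss of generality that $y \in (a, c)$, the case $y = c$ being trivial.

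Setting $\delta := c - y > 0$, I let $W_s := c + B_s$ denote the Brownian motion starting at $c$ and introduce the stopping time
\[
\rho := \inf\{s \geq 0 : |W_s - c| = \delta\} = \inf\{s \geq 0 : W_s \in \{y, R(y)\}\}.
\]
Since $(c - \delta, c + \delta) = (y, R(y)) \subset (a,b)$, continuity of $W$ yields $W_s \in (a,b)$ for all $s \in [0, \rho]$; in particular $\tau_c > \rho$ almost surely and $W_\rho \in \{y, R(y)\}$.

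Next I apply the strong Markov property at $\rho$: the shifted process $(W_{\rho+s})_{s \geq 0}$ is a Brownian motion starting at $W_\rho$ and independent of $\mathcal{F}_\rho$. On $\{W_\rho = y\}$ the remaining time $\tau_c - \rho$ has the law of $\tau_y$, while on $\{W_\rho = R(y)\}$ it has the law of $\tau_{R(y)}$. By the reflection symmetry of $(a,b)$ about $c$ these two laws coincide, so the conditional distribution of $\tau_c - \rho$ given $\mathcal{F}_\rho$ always equals $\mathcal{L}(\tau_y)$. This produces the in-distribution decomposition $\tau_c \stackrel{d}{=} \rho + T$, where $T$ is distributed as $\tau_y$ and independent of $\rho \geq 0$, whence $\mathbb{P}(\tau_c > t) \geq \mathbb{P}(T > t) = \mathbb{P}(\tau_y > t)$ follows at once.

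The only step requiring some care is the symmetry argument: it is essential that $y$ and $R(y)$ be equidistant from $c$ so that the two possible values of $W_\rho$ give rise to identically distributed exit times. Without the reflection invariance of $I$ about its midpoint, the strong Markov decomposition would produce only a mixture of $\tau_y$ and $\tau_{R(y)}$ rather than a single distributional identity, and the simple stochastic-domination conclusion would fail.
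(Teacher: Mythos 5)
Your argument is correct, and it takes a genuinely different route from the paper. The paper proves the same stochastic domination via a mirror coupling of two processes: it runs $X_t = y - B_t$ (started at $y$) and $Y_t = c + B_t$ (started at $c$) with the same driving Brownian motion but opposite signs, glues them together at their first meeting time $\tau^{cy}$ (at which both are at $\tfrac{c+y}{2}$), and observes that on the complementary event $\{\tau_c < \tau^{cy}\}$ the reflected process $X$ must already have exited, so that pathwise $\tau_y \le \tau_c$ on the coupled space. Your proof instead works with a single Brownian motion $W = c + B$ and decomposes $\tau_c$ via the strong Markov property at the symmetric two-sided hitting time $\rho$ of $\{y, R(y)\}$; the reflection symmetry of $(a,b)$ about $c$ makes the conditional post-$\rho$ exit-time law independent of which of the two points was hit, yielding the clean distributional identity $\tau_c \stackrel{d}{=} \rho + T$ with $T \stackrel{d}{=} \tau_y$ independent of $\rho$. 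Both hinge on the same midpoint symmetry of the interval; the mirror coupling gives a pathwise ordering on one probability space, while your Markov decomposition gives a slightly sharper distributional statement (an additive, independent decomposition) with arguably less bookkeeping — in particular, no case analysis of where the reflected process exits is needed. Either is an acceptable proof, and yours is a legitimate alternative.
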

\begin{proof}
The proof is based on a simple coupling argument: Without loss of generality we may assume that $b>y>c$. We define the coupling as follows:
For $t<\tau^{cy}=\inf\{t:X_t=Y_t\}$ let
$X_t=y-B_t$ and $Y_t=c+B_t$. Now let us distinguish the following two cases:
First case: $X$ and $Y$ meet in $\frac{c+y}{2}$ at time $\tau^{cy}<\tau_c$. In this case let
$Y=X=\frac{c+y}{2}+B_t -B_{\tau_c^1}$ for $t\ge \tau^{cy}$ and hence both processes leave the interval at the same time.\\
Second case: $X$ and $Y$ do not meet each other before $\tau_c$. In this case, we have by definition of the processes that
$\tau_y <\tau_c$. Hence we obtain
\begin{eqnarray*}
\mathbb{P}\bigl(\tau_c>t\bigr)&=&\mathbb{P}\bigl(\tau_c>t, \tau_c>\tau^{cy}\bigr)+\mathbb{P}\bigl(\tau_c>t,\tau_c<\tau^{cy}\bigr)\nonumber\\
&=&\mathbb{P}\bigl(\tau_y>t, \tau_c>\tau^{cy}\bigr)+\mathbb{P}\bigl(\tau_c>t,\tau_c<\tau^{cy}\bigr)\nonumber\\
&\ge&\mathbb{P}\bigl(\tau_y>t, \tau_c>\tau^{cy}\bigr)+\mathbb{P}\bigl(\tau_y>t,\tau_c<\tau^{cy}\bigr)\nonumber\\
&=&\mathbb{P}\bigl(\tau_y>t\bigr).
\end{eqnarray*}
 \end{proof}
Now we come back to the symmetry argument, which already played an essential idea in Proposition \ref{bound}. In the following Proposition we put this symmetry argument -- which is the key idea in our approach -- in the coupling context. 
\begin{proposition}\label{keyprop}
Suppose that $a<x\le c=\frac{a+b}{2}\le y<b$ and $y=R(x)=a+b-x$. Then there is a coupling of $X^{\nu,x}$ and $X^{\nu,y}$ such that the coupling
time is equal to the exit time of a standard BM (=starting at $0$) from the interval $\tilde{I}=\tilde{I}(x,y)=(-\frac{y-x}{2},\frac{(b-a)-(x-y)}{2})$ .
\end{proposition}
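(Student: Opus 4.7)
The plan is to take a single standard Brownian motion $B$ starting at $0$ on a common probability space (together with an independent $\nu$-distributed sample $Z$) and to run a reflection coupling: set
\[
X^{\nu,x}_t \,=\, x + B_t, \qquad X^{\nu,y}_t \,=\, y - B_t
\]
up to the first of two stopping events, namely an interior meeting of the two processes or a simultaneous boundary hit. The crucial structural feature is that the hypothesis $y=R(x)=a+b-x$ gives the identity $X^{\nu,x}_t+X^{\nu,y}_t = x+y = 2c$ during the excursion, so the two paths are mirror images through the midpoint $c$. From this I would deduce at once that (i) the processes hit $\partial(a,b)$ \emph{simultaneously} (with $X^{\nu,x}=a \Leftrightarrow X^{\nu,y}=b$, and conversely), and (ii) they meet at the centre $c$ precisely when $B_t$ reaches the level $(y-x)/2$.

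To close the coupling I would prescribe that, on a simultaneous boundary hit, both processes are redistributed to the \emph{same} sample $Z \sim \nu$, while at a first interior meeting the two paths are simply identified for all later times. Since $-B$ is also a standard Brownian motion and $\nu$ is used symmetrically for the coupled jump, one checks that each marginal is a genuine BMJB with jump distribution $\nu$ started at $x$ (resp. $y$); hence this really is a coupling in the technical sense.

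By construction the coupling time $\tau$ is a hitting time for $B$: it is the first time $B$ reaches one of the three critical values $a-x$, $(y-x)/2$, $b-x$. Using the strict inequalities that follow from $a<x\le c\le y<b$ (in particular $(y-x)/2 < b-x$, so $b-x$ cannot be attained going to the right before $(y-x)/2$), this hitting time collapses to a first exit of $B$ from an explicit interval. After substituting $y=a+b-x$ and simplifying, that interval rewrites to $\tilde I(x,y) = (-\tfrac{y-x}{2},\tfrac{(b-a)-(x-y)}{2})$, giving the statement.

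The main obstacle will be the last identification step, which requires careful bookkeeping of which of the three candidate levels of $B$ is really the binding one from the starting point $0$ and a clean algebraic simplification using $y = R(x)$. A secondary technical point is the verification that each of the constructed marginal processes genuinely has the law of $X^{\nu,\cdot}$: this is essentially immediate from the symmetry between $B$ and $-B$ together with the symmetric use of $\nu$ at the coupled jump event.
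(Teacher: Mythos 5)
Your proposal follows exactly the paper's construction: the reflection coupling $X^{\nu,x}_t = x+B_t$, $X^{\nu,y}_t = y-B_t$ with its invariant $X^{\nu,x}_t + X^{\nu,y}_t \equiv 2c$, the simultaneous boundary hit with both processes redistributed to a common $\nu$-sample, coalescence thereafter, and the identification of the coupling time with a one-dimensional exit time for $B$. In substance this is the same proof.

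One caveat on the step you flagged as potentially delicate. With your parametrization the binding levels for $B$ are $a-x<0$ (simultaneous boundary hit) and $(y-x)/2>0$ (meeting at $c$), giving the exit interval $\bigl(a-x,\ (y-x)/2\bigr)$, whose length is $(y+x)/2-a=(b-a)/2$. This does \emph{not} algebraically simplify to the stated $\tilde{I}(x,y)=\bigl(-\tfrac{y-x}{2},\tfrac{(b-a)-(x-y)}{2}\bigr)$ as you assert: the displayed right endpoint equals $b-x$, so the displayed interval has length $(b-a)/2+(y-x)$, which is inconsistent even with the paper's own Remark that $|\tilde{I}|=(b-a)/2$. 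The stated formula appears to carry a sign slip and should read $\tfrac{(b-a)-(y-x)}{2}=x-a$; with that correction $\tilde{I}=(-\tfrac{y-x}{2},\,x-a)$ is precisely the reflection $-\bigl(a-x,(y-x)/2\bigr)$ of your interval, i.e.\ the interval obtained from the identical construction with $B$ replaced by $-B$. Since $B$ and $-B$ are equidistributed, the exit time has the same law and only the length $(b-a)/2$ is used downstream, so this is a bookkeeping discrepancy and not a mathematical gap -- but you should not present the simplification as routine without carrying it out, since doing so would have revealed the mismatch.
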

\begin{proof}
We use the same notation as in the previous proof and set $\tau=\tau_x\wedge\tau^{xy}$. For $t<\tau$ define
 $X^{\nu,x}=x+B_t$ and $X^{\nu,y}=y-B_t$. Since $y=R(x)$, we have to distinguish \\
\begin{itemize}
\item
First Case:\\
$X^{\nu,x}_{\tau}=x+B_{\tau}=y-B_{\tau}=X^{\nu,y}_{\tau}=c$. For $t\ge\tau$ we define $X^{\nu,x}_{t}=X^{\nu,y}_{t}=c+B_{t-\tau}$. 
\item
Second Case:\\
$x+B_{\tau}=a$ and $y-B_{\tau}=b$. For $t\ge\tau$ we define $X^{\nu,x}_{t}=X^{\nu,y}_{t}=\nu+B_{t-\tau}$
\end{itemize}
We see that as long as the path of $B_t$ is contained in the interval $\tilde{I}=(-\frac{y-x}{2},\frac{(b-a)-(x-y)}{2})$, the
processes $X^{\nu,x}$ and $X^{\nu,y}$ do not merge, but once the Brownian motion $(B_t)_{t \geq 0}$ exits $\partial\tilde{I}$ they immediately colesce. This yields the claim.   
\end{proof}
\begin{remark}
A crucial observation is that $|J(x,R(x))|=\frac{b-a}{2}$ is independent of $x$ and that $\lambda_{1}^{(a,b)}=\lambda_{0}^{(a,\frac{a+b}{2})}$.  
\end{remark}
Now we will show that this idea can be extended to processes with arbitrary initial values $x,y$ whenever the jump measure is compactly supported 
in $(a,b)$.
\subsection{Construction of the coupling}

Now let us built up the coupling. First of all note that by a very simple argument using the triangle inequality and a symmetry argument
we can assume without loss of generality that
\begin{equation}\label{key_assump}
0<y-x<dist(supp(\nu),\{a,b\})\, \text{   and   }\,\frac{a+b}{2} \leq \frac{x+y}{2}.
\end{equation} 
Let us introduce two copies of BMJB, called $X$ and $Y$ in the following way:
\begin{enumerate}
\item[a)]
Let $x_1=x$, $y_1=y$ and $X_t=x_1+B_t$, $Y_t=y_1+B_t$. Now stop stage a) at time $\tau_1=\tau_{sym}\wedge \tau_{b}$,
where $\tau_{sym}=\inf\{t:R(x_1+B_t)=y_1+B_t\}$ and $\tau_{b}=\inf\{t:y_1+B_t=b\}$, 
i.e. we stop when either the copies are in a symmetric position ($\tau_1=\tau_{sym}$) or when $Y$ hits $b$ ($\tau_1=\tau_{b}$). Note that due to the assumption \eqref{key_assump} one of these two cases has to occur.\\
\textbf{First case:} If $\tau_1=\tau_{sym}$ then at time $\tau_1$ we are in the situation to apply the coupling presented in Proposition \ref{keyprop}.
\\
\textbf{Second case:} If $\tau_1=\tau_{b}$ then we have $x_2:=X_{\tau_1}=X_{\tau_1-}=x_1 +(b-y)=b-(y-x)$ and $Y_{\tau_1}=y_2=J_1$, where $J_1$ denotes the first jump with distribution $\nu$.  The construction is continued with the next stage.
\item[b)]
Let $X=x_2+(B_t-B_{\tau_1})(=x_1+B_t)$ and 
\begin{equation}\label{stoch_integral}
Y=y_2-(B_t-B_{\tau_1})=\int_{0}^{t}(-1)^{\mathbf{1}_{\lbrace\tau_1<t \rbrace}}dB_s. 
\end{equation}
To see that the stochastic integral in the definition of $Y$ is well-defined, let  
$\mathcal{F}_t=\sigma(B_s,s\le t)$ and $\mathcal{G}_t=\sigma(\mathcal{F}_t,\sigma(J_1))$ the $\sigma$-field generated by $\sigma(J_1)$ and $\mathcal{F}_t$. Now observe that $(-1)^{\mathbf{1}_{\{\tau_1<t\}}}$ is $\mathcal{G}_t$-measurable and that by independence of $\sigma(J_1)$ and $\mathcal{F}_t$ $B_t$ is still a Brownian motion with respect to $\mathcal{G}_t$, the integral in \eqref{stoch_integral} is still well-defined. 
Now recall that $x_2-y_2=x+(b-y)-J_1=b-(J_1+(y-x))>0$. Stop stage at time $\tau_2=$ either
copies meet or when the distance from $Y$ to $X$ is $b-J_1$.\\
\textbf{First case:} The coupling occurs and we are done.
\\
\textbf{Second case:} The initial distance from $Y$ to $X$ is $(b-J_1)-(y-x)$. Therefore, $Y$ moved $\frac{y-x}{2}$ to the left, i.e. $y_3=y_2-\frac{y-x}{2}=J_1-\frac{y-x}{2}>a$ (due to \eqref{key_assump}), and $X$ moves $\frac{y-x}{2}$ to the right, i.e.
$X$ stops at $x_3=x_2+\frac{y-x}{2}=b-\frac{y-x}{2}<b$. Now move to stage 3.
\item[c)]
Let $X=x_3+(B_t-B_{\tau_2})$, $Y=y_3+(B_t-B_{\tau_2})$. Recall that $x_3>y_3$ and $x_3-y_3=b-J_1$. Stop stage c) at time
$\tau_3=$ either $X$ and $Y$ are symmetric or $X$ hits $b$.
\\
\textbf{First case} In the symmetric case we continue the coupling as in Proposition \ref{keyprop}. 
\\
\textbf{Second case:} Here we have $X_{\tau_3-}=b$ and $Y_{\tau_3} = b-(b-J_1)=J_1$. At time $\tau_3$ we let $X$ jump to $J_1$, i.e. we use the same $J_1$ as has earlier been used for $Y$. Hence the coupling occurs and we are done.  
\end{enumerate}
First let us observe that the above construction is in fact a coupling for the processes, i.e. both processes
have the same marginal distributions. 
Denote $\tau_{hit}=\inf\{t:X_t \mbox{ hits the boundary }\partial I\}$. Then, for $t<\tau_{hit}$ we have by construction of $X_t$ 
\[x+B_t,\]
since all changes of sign of the BM appear by definition in the $Y$-process. This immediately implies that $X$, up to its first arrival to the boundary,
is independent of $J_1$ and behaves as a BM. It remains to show that $Y$ also behaves as a BM in the interior of the domain. Now let $\tau_{hit}^1$ ($\tau_{hit}^2$) be the first (second) time when $Y$ hits $\partial(I)$ and observe that by construction for $t<\tau_{hit}^1$, 
$Y$ can be written as
\[Y_t=y+\int_{0}^t f(s)dB_s\] 
and similarly for $\tau_{hit}^2 > t\ge \tau_{hit}^1$
\[Y_t=J_1+\int_{\tau_{hit}^1}^t f(s)dB_s\] 
where $f(s)$ is $\mathcal{G}_s$-measurable and only takes values in $\{-1,1\}$, because $f$ keeps track of the
changes of sign of the BM as defined in the coupling construction. But up to time $t$ these changes only depend on the $\mathcal{F}_t$-history of the 
BM and on $\sigma(J_1)$.  
But from here it is easy to see, e.g. using Levy's characterization of the BM that $Y$ also behaves as a BM in
$I$.
\begin{subsection}{Analysis of the construction}
The crucial observation in the analysis of the coupling construction is that in each stage the stopping rule is determined by an exit time of a standard BM on an interval of length less or equal to $\frac{b-a}{2}$. These exit times (which are not independent) can be dominated by a sum of 
independent exit times of BM's of an interval of length $\frac{b-a}{2}$, as we will see in the sequel. 
\begin{enumerate}
\item[a)]
\textbf{First case:} In this case we stop the process when the two copies are symmetric about the origin. For this to occur the BM has to arrive to $\frac{(a+b)-(y-x)}{2}-x$ (terminal $-$ initial initial location of $X$)$= \frac{(a+b)-(y+x)}{2}<0$.\\
\textbf{Second case:} Here we stop when the coordinate $Y$ hits $b$, i.e. in the case when the BM $B$ has hit $b-y>0$.\\

Thus, stage a) ends when the BM exists an interval of length $b-y+\frac{(y+x)-(a+b)}{2}<\frac{b-a}{2}$.\\
\item[b)]
\textbf{First case:} We stop when $X$ and $Y$ arrive at the point $-\frac{x_2-y_2}{2}=\frac{(y-x)-(b-J_1)}{2}$.\\
\textbf{Second case:} We stop when BM arrives to $\frac{y-x}{2}>0$.\\

Thus, stage b) ends when BM exists an interval of length $\frac{b-J_1}{2}<\frac{b-a}{2}$\\

\item[c)]
\textbf{First case:} We stop when $X$ and $Y$ are in symmetric locations. Initially, $X$ is in $x_3=b-\frac{y-x}{2}$, and $X$ is kept $b-J$ above
$Y$ throughout the stage. Therefore we stop when the BM arrives to $\frac{(a+b)+(b-J)}{2}-x_3=\frac{a+(y-x)-J}{2}<0$.\\
\textbf{Second case:} We stop when BM hits $b-x_3=\frac{y-x}{2}$.\\

Thus, stage c) ends when BM exists an interval of length $\frac{J-a}{2}<\frac{b-a}{2}$.
\end{enumerate}
Now we are able to finish the proof of Theorem \ref{main} by estimating the tails of the coupling time.
\begin{proof}(Theorem \ref{main})
Now let $\xi_1,\xi_2,\ldots,\xi_5$ be independent exit times of standard Brownian motion of the interval $\partial\tilde{I}=(-\frac{a+b}{2},\frac{a+b}{2})$.
Then, by Proposition \ref{firstprop} and Proposition \ref{keyprop} we can conclude that the probability of the event
\begin{itemize}
\item 
both processes bave been coupled before time $t$ \textbf{or} we have moved to stage b)  before $t$
\end{itemize} 
is dominated by $\mathbb{P}\bigl(\xi_1+\xi_2>t\bigr)$. Continuing this line of reasoning we see that the coupling time $\tau_{coupl}$ is dominated by
$\sum_{i=1}^{5}\xi_i$ in the sense that 
\begin{equation*}\label{e:comparison}
\mathbb{P}(\tau_{coupl}>t)\le\mathbb{P}\biggl(\sum_{i=1}^{5}\xi_i>t\biggr).
\end{equation*}
Now we obtain  
\begin{equation*}  
\begin{split}
\mathbb{P}\bigl(\tau_{coupl}>t\bigr)&\le \mathbb{P}\biggl(\sum_{i=1}^{5}\xi_i>t\biggr) \leq e^{-\lambda t}\mathbb{E}\biggl[e^{\lambda \sum_{i=1}^{5}\xi_i}\biggr]\\
&= \left(\mathbb{E}\bigl[e^{\lambda\xi_1}\bigr]\right)^5e^{-\lambda t}.
\end{split}
\end{equation*}
Since $\mathbb{E}\bigl[e^{\lambda\xi_1}\bigr]<\infty$ for all $\lambda<\lambda_1^{(a,b)}$, the claim follows.
\end{proof}

\end{subsection}

\begin{remark}
For pure probabilists the application of the more analytic Theorem \ref{contofgap} via Corollary \ref{c:redcompsupp} may be somewhat unsatisfactorily; we included this result due to its applicability in the multi-dimensional setting. It seems to be very probable that Theorem \ref{contofgap} may be replaced by an additional coupling argument in the proof of Theorem \ref{main}. 
\end{remark}

\section{Different jump distributions}
We end this work with some remarks concerning the case of a one-dimensional BM in $(a,b)$ with two different jump distributions $\nu_a$ and $\nu_b$. It is a natural question, whether one can adopt the methods of this paper in order to prove another very recent results of Li, Leung and Rakesh \cite{LLR} and Li and Leung \cite{LL}, respectively, namely
\begin{equation}\label{sup}
 \sup_{\nu_a,\nu_b} \gamma_1(\nu_a,\nu_b) = \lambda_2^D
\end{equation}
and
\begin{equation}\label{inf}
\inf_{\nu_a,\nu_b}\gamma_1(\nu_a,\nu_b) = \lambda_0^D
\end{equation}
It is known that the supremum is \eqref{sup} is attained at $\nu_a=\delta_{((a+2b)/3}$ and $\nu_b = \delta_{(2a+b)/3}$ and that the infimum in \eqref{inf} is never attained. Let us remark that equation \eqref{inf} is from a heuristic point of view probabilistically rather clear, as taking $\nu_{a,n}:=\delta_{a+1/n}$ and $\nu_{b,n}:=\delta_{b+1/n}$ one might expect that as $n \rightarrow \infty$ one gets a BM in $(a,b)$ with reflecting boundary conditions.

At the moment we can extend our methods only to certain very special classes of of jump distributions $\nu_a$ and $\nu_b$, but these considerations indicate that with more effort one might be able to extend our methods to the more general situation of different jump distributions. 
Still at the moment we are far from being able to present a probabilistic proof of the beautiful assertions \eqref{sup} and \eqref{inf}, but we believe that a solution of the following problem deepens the probabilistic understanding of the large time behavior of the BM with jump boundary considerably.
\\
\\
\textbf{Open Problem:} Find a coupling approach to the recent results \eqref{sup} and \eqref{inf} of Leung and Li.
\\
\\
We hope to come back to this problem in a subsequent publication. 
\section*{Acknowledgements}
The authors would like to thank Heinrich von Weizs\"acker (Kaiserslautern) for initiating this collaboration by establishing the first contact between the authors and Ross Pinsky (Haifa) for several useful comments concerning the topic of this work as well as the probability theory group at the LMU Munich for their financal support of this collaboration. Moreover, we would like to thank an anonymous referee, whose detailed report improved the paper considerabely.

\end{document}